\newtheorem{theorem}{Theorem}[section]
\newtheorem*{theorem*}{Theorem}
\newtheorem{lemma}[theorem]{Lemma}
\newtheorem{corollary}[theorem]{Corollary}
\newtheorem{proposition}[theorem]{Proposition}
\newtheorem{definition}[theorem]{Definition}
\newtheorem{example}[theorem]{Example}
\newtheorem{remark}[theorem]{Remark}
\newcommand{\R}{\mathbb{R}}
\newcommand{\C}{\mathbb{C}}
\begin{document}

\title[H\"older equivalence of complex analytic curve singularities]
{H\"older equivalence of complex analytic curve singularities}

\author{Alexandre Fernandes}
\author{J. Edson Sampaio}
\address{Alexandre ~Fernandes and J. Edson ~Sampaio -
Departamento de Matem\'atica, Universidade Federal do Cear\'a, Av.
Humberto Monte, s/n Campus do Pici - Bloco 914, 60455-760
Fortaleza-CE, Brazil}
 \email{alex@mat.ufc.br}
 \email{edsonsampaio@mat.ufc.br}

\author{Joserlan P. Silva}
\address{Joserlan P. ~Silva -
Instituto de Ci\^encias Exatas e da Natureza, Universidade da Integra\c c\~ao Internacional da Lusofonia Afro-Brasileira, CE060, Km51, Campus dos Palmares, 62785-000
Acarape-CE, Brazil}
 \email{joserlanperote@unilab.edu.br}

\keywords{Bi-Lipschitz equivalence, bi-H\"older equivalence, Classification of analytic curves}
\subjclass[2010]{14B05; 32S50}
\thanks{The first named author were partially supported by CNPq-Brazil grant 302764/2014-7 \\
The third named author were partially supported by CAPES and CNPq-Brazil}

\begin{abstract} We prove that if two germs of irreducible complex analytic curves at $0\in\C^2$ have different sequence of characteristic exponents, then there exists  $0<\alpha<1$ such that those germs are not $\alpha$-H\"older homeomorphic.
For germs of complex analytic plane curves with several irreducible components we prove that if any two of them are $\alpha$-H\"older homeomorphic, for all $0<\alpha<1$, then there is a correspondence between their branches preserving sequence of characteristic exponents and intersection multiplicity of pair of branches. In particular, we recovery the sequence of characteristic exponents of the branches and intersection multiplicity of pair of branches are Lipschitz invariant of germs of complex analytic plane curves.

\end{abstract}

\maketitle
\section{Introduction}

The recognition problem of embedded topological equivalence of germs of complex analytic plane curves at $0\in\C^2$ has a complete solution due to K. Brauner, W. Burau, Kh\"aler and O. Zariski (See \cite{B-K}). For instance, for irreducible germs (branches), it is shown that any two of them are topological equivalent if, and only if, they have the same sequence of characteristic exponents.  For germs of complex analytic plane curves with several irreducible components (several branches), it is shown that any two of them  are topological equivalent if, and only if, there is a correspondence between their branches preserving sequence of characteristic exponents of branches and intersection multiplicity of pair of branches. In \cite{P-T}, F. Pham and B. Teissier proved that if two germs of complex analytic plane curves at $0\in\C^2$, let us say $X$ and $Y$, are topological equivalent as germs embedded in $(\C^2,0)$ (i.e. there exists a bijection between their branches preserving sequence of characteristic exponents and intersection multiplicity of pair of branches), then there exists a germ of meromorphic bi-Lipschitz homeomorphism $$\phi\colon(\C^2,0)\rightarrow(\C^2,0)$$ such that $\phi(X)=Y$. Actually, Pham and Teissier proved the respective converse result exactly as it is stated below. Other versions of this result can be seen in \cite{F} and \cite{N-P}.

\begin{theorem}[Pham-Teissier] If there exists a germ of meromorphic bi-Lipschitz homeomorphism $\phi\colon X\rightarrow Y$ (not necessarily from $\C^2$ to $\C^2$), then there exists a correpondence between their branches preserving sequence of characteristic exponents and intersection multiplicity of pair of branches.
\end{theorem}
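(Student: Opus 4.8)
The plan is to reduce the statement to a comparison of Puiseux parametrizations and then to read off both invariants from the outer metric as asymptotic rates, which are insensitive to the bounded distortion produced by a bi-Lipschitz map. First I would record that a homeomorphism germ $\phi\colon X\to Y$ fixes the origin and carries $X\setminus\{0\}$ homeomorphically onto $Y\setminus\{0\}$; since the branches of $X$ are exactly the closures of the connected components of $X\setminus\{0\}$ near $0$, $\phi$ induces a bijection $\sigma$ between the branches $X_1,\dots,X_r$ of $X$ and $Y_1,\dots,Y_r$ of $Y$ with $\phi(X_i)=Y_{\sigma(i)}$. Writing $\gamma_i$ and $\gamma'_{\sigma(i)}$ for the normalizations (Puiseux parametrizations) of $X_i$ and $Y_{\sigma(i)}$ and using that $\phi$ is meromorphic, the composite $\phi\circ\gamma_i$ lifts through the normalization of $Y_{\sigma(i)}$ to a germ $h_i\colon(\C,0)\to(\C,0)$ with $\gamma'_{\sigma(i)}\circ h_i=\phi\circ\gamma_i$; since $\phi$ is simultaneously a homeomorphism and meromorphic, $h_i$ is a biholomorphism, so $h_i(t)=a_it+O(t^2)$ with $a_i\neq0$. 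The bi-Lipschitz hypothesis then becomes the two-sided estimate
$$\tfrac1C\,|\gamma_i(t)-\gamma_i(s)|\ \le\ |\gamma'_{\sigma(i)}(h_i(t))-\gamma'_{\sigma(i)}(h_i(s))|\ \le\ C\,|\gamma_i(t)-\gamma_i(s)|,$$
valid for all small $t,s$, together with the analogous estimate comparing points of two distinct branches.

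Next I would extract the multiplicities and the pairwise intersection multiplicities. Normalizing coordinates so that $|\gamma_i(t)|\asymp|t|^{m_i}$ with $m_i=\mathrm{mult}_0 X_i$, the bound $\tfrac1C|p|\le|\phi(p)|\le C|p|$ (which holds because $\phi(0)=0$) together with $h_i(t)\asymp t$ forces $|t|^{m_{\sigma(i)}}\asymp|t|^{m_i}$, hence $m_i=m_{\sigma(i)}$. For $i\neq j$ the intersection multiplicity $i(X_i,X_j)$ is encoded in the contact order of the two branches, which I would recover metrically as the exponent $q_{ij}$ in $\min\{|p-q|:p\in X_i,\,q\in X_j,\,|p|=|q|=\epsilon\}\asymp\epsilon^{q_{ij}}$; because $\phi$ preserves distances and distances to the origin up to the factor $C$, it preserves $q_{ij}$, and since the multiplicities already match, the standard relation between $q_{ij}$, $m_i$, $m_j$ and $i(X_i,X_j)$ yields $i(X_i,X_j)=i(Y_{\sigma(i)},Y_{\sigma(j)})$.

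Finally, and this is where I expect the real work to be, I would recover the full sequence of characteristic exponents of each branch as a \emph{self}-contact invariant. The characteristic exponents $\beta_1<\dots<\beta_g$ of $X_i$ are exactly the orders in $t$ of the differences $\gamma_i(t)-\gamma_i(\zeta t)$ as $\zeta$ ranges over the nontrivial $m_i$-th roots of unity; metrically they appear as the distinct rates $|t|^{\beta_k}$ at which points of $X_i$ lying at the same distance $\epsilon=|t|^{m_i}$ from $0$ but far apart in the link come together. The plan is to characterize these rates purely in terms of the outer metric of $X_i$ and then invoke the two-sided estimate above. The main obstacle is that the extraction of the higher exponents $\beta_2,\dots,\beta_g$ takes place at successively finer parameter scales, and one must show that neither the bounded metric distortion of $\phi$ nor the reparametrization $h_i$—which is only known to be biholomorphic, i.e. linear to leading order—mixes these scales; equivalently, one must prove that the whole order structure of the bivariate map $(t,s)\mapsto\gamma_i(t)-\gamma_i(s)$ along the locus $t^{m_i}=s^{m_i}$ is a bi-Lipschitz invariant. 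Once this scale separation is established, comparing the invariant rates on the two sides gives $\beta_k(X_i)=\beta_k(Y_{\sigma(i)})$ for every $k$, which together with the matching of multiplicities and intersection multiplicities completes the proof.
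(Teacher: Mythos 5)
First, a point of orientation: the paper does not prove this theorem --- it quotes it from \cite{P-T} as motivation and later recovers it (Corollaries \ref{cor-2} and \ref{cor-3}) as a consequence of its H\"older-theoretic main result, Theorem \ref{first-cor}, whose proof runs through the contact-order machinery of Proposition \ref{1} and Lemmas \ref{lemma3.2} and \ref{lemma3.0}. Your overall strategy --- encode the multiplicity, the characteristic exponents and the pairwise intersection multiplicities as asymptotic contact rates and argue that such rates survive bounded metric distortion --- is exactly the strategy behind that machinery, and your recovery of the multiplicities from $\|\phi(p)\|\asymp\|p\|$ together with $h_i(t)\asymp t$ is fine.

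The gap is in your third step, and you have in fact named it yourself. Reading the characteristic exponents $\beta_1<\cdots<\beta_g$ of a branch as the orders of $\gamma_i(t)-\gamma_i(\zeta t)$ for $\zeta^{m_i}=1$ is correct, but turning these orders into outer-metric invariants is the entire content of the theorem, and your argument stops at ``once this scale separation is established.'' The difficulty is genuine: $\phi$ does not respect the fibers of $(x,y)\mapsto x$, and the reparametrization $h_i$ does not send $\zeta t$ to $\zeta' h_i(t)$, so $\gamma'_{\sigma(i)}(h_i(t))-\gamma'_{\sigma(i)}(h_i(\zeta t))$ is not a priori a self-contact at a common $x$-level; one must show that the infimum defining the contact of two suitably chosen arcs is asymptotically attained on such root-of-unity pairs, and that the exponents $\beta_k$ can be pinned down one at a time from $\beta_1$ upward without the bounded distortion mixing the scales. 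This is precisely what the explicit arcs $\Sigma_1,\dots,\Sigma_4$ and $\Gamma_1,\dots,\Gamma_4$ in Lemmas \ref{lemma3.2} and \ref{lemma3.0} are engineered to accomplish, with Proposition \ref{1} supplying the invariance of the contact exponent; some argument of this kind must be supplied before the proof is complete. A secondary problem is the order of deductions for intersection multiplicities: $i(X_i,X_j)$ is not a function of the contact exponent $q_{ij}$ and the multiplicities $m_i,m_j$ alone --- by Merle's formula (Proposition 2.4 of \cite{M}, as invoked in the proof of Theorem \ref{first-cor}) it also involves the characteristic exponents of the branches up to the coincidence order --- so that step must come after, not before, the identification of the characteristic sequences.
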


In the next, we are going to define the notion of $\alpha$-H\"older equivalence
of germ of subsets in Euclidean spaces, where $\alpha$ is a positive real number. Let us remind that a mapping $f\colon U\subset\R^n\rightarrow\R^m$ is called \emph{$\alpha$-H\"older} if there exists a positive real number $C$ such that
$$\| f(p)-f(q) \| \leq C \| p-q \|^{\alpha} \ \forall \ p,q\in U.$$

\begin{definition}{\rm  Let $(X,x_0)$ and $(Y,y_0)$ be germs of Euclidean subsets. We say that $(X,x_0)$ is \emph{$\alpha$-H\"older homeomorphic to} $(Y,y_0)$ if there exists a germ of homeomorphism $f\colon (X,x_0)\rightarrow (Y,y_0)$ such that $f$ and its inverse $f^{-1}$ are $\alpha$-H\"older mappings. In this case, $f$ is called a \emph{bi-$\alpha$-H\"older homeomorphism} from $(X,x_0)$ onto $(Y,y_0)$.}
\end{definition}

\begin{remark} {\rm Let us remark that bi-$1$-H\"older homeomorphisms from $(X,x_0)$ onto $(Y,y_0)$ are nothing else than bi-Lipschitz homeomorphisms. Moreover, if  $(X,x_0)$ is $\alpha_0$-H\"older homeomorphic to $(Y,y_0)$ for some $0<\alpha_0\leq 1$, then $(X,x_0)$ is $\alpha$-H\"older homeomorphic to $(Y,y_0)$ for any $0<\alpha\leq\alpha_0$}.
\end{remark}

One of the goals of this paper is to prove that for any pair $X$ and $Y$ of germs of irreducible complex analytic  curves at $0\in\C^2$ with different sequence of characteristic exponents, there exists $0<\alpha<1$ such $X$ and $Y$ are not $\alpha$-H\"older homeomorphic. For germs of complex analytic plane curves at $0\in\C^2$, $X$ and $Y$, with two branches, we prove that if the contact of their branches are different, then there exists  $0<\alpha<1$ such $X$ and $Y$ are not $\alpha$-H\"older homeomorphic. Let us remark that these results generalize Theorem of Pham-Teissier and its versions in \cite{F} and \cite{N-P}, see Corollary \ref{cor-2} and \ref{cor-3}.

\bigskip


\section{Preliminaries}\label{section:preliminaries}

Let us begin by establishing some notations.  Given two  nonnegative functions $f$ and $g$, we write $f\lesssim g$ if there exists some
positive constant $C$ such that $f\leq Cg.$ We also denote $f\approx  g$ if $f\lesssim g$ and $g\lesssim f$. If $f$
and $g$ are germ of functions on $(X, x_0 )$, we write $f\ll  g$ if $ g^{-1}(0) \subset f^{-1}(0)$ and
$\lim_{x\to x_0} [f (x)/g(x)] = 0.$

 Let $\Gamma_1, \Gamma_2$ be germs of closed subsets at $0\in\R^n$ such that $\Gamma_1\cap\Gamma_2=\{0\}$.  For $r>0$ sufficiently small, let us define
$$
f_{\Gamma_1,\Gamma_2}(r)=\inf \{\|\gamma_1-\gamma_2\| \ | \, \gamma_i\in\Gamma_i \ \mbox{and} \ \|\gamma_i \| \geq r ; \ i=1,2\}.
$$

\begin{definition}
{\rm The} contact of {\rm $\Gamma_1$ and $\Gamma_2$ is the real number below}
$$
Cont(\Gamma_1,\Gamma_2)=\lim _{r\to 0^+} \frac{\ln(f_{\Gamma_1,\Gamma_2}(r))}{\ln(r)}.
$$
\end{definition}

\begin{remark}
{\rm Notice that $Cont(\Gamma_1,\Gamma_2)$ is always at least 1 and  it may occur $Cont(\Gamma_1,\Gamma_2)=+\infty $.}
\end{remark}

\begin{proposition}\label{1}
Let $X$ and $Y$ be two germs of Euclidean closed subsets at $0$ and let $h:(X,0)\to (Y,0)$ be an $\alpha$-H\"older homeomorphism. If $\Gamma_1,\Gamma_2\subset X$ are closed, $\Gamma_1\cap\Gamma_2=\{ 0\}$  then
$$\alpha^2Cont(h(\Gamma_1),h(\Gamma_2))\leq Cont(\Gamma_1,\Gamma_2) \leq Cont(h(\Gamma_1),h(\Gamma_2))\frac{1}{\alpha^2} .$$
\end{proposition}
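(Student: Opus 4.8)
The plan is to reduce everything to the elementary two-sided estimate that a bi-$\alpha$-H\"older homeomorphism provides, and then to chase this estimate through the definition of the contact. First I would fix a constant $C\geq 1$ that serves as an $\alpha$-H\"older constant for both $h$ and $h^{-1}$. Since $h(0)=0$, applying the H\"older bound for $h$ to the pair $(p,q)$ and the H\"older bound for $h^{-1}$ to the pair $(h(p),h(q))$ yields, for all $p,q\in X$,
\[
C^{-1/\alpha}\,\|p-q\|^{1/\alpha}\;\leq\;\|h(p)-h(q)\|\;\leq\;C\,\|p-q\|^{\alpha}.
\]
Taking $q=0$ gives in particular the comparison $C^{-1/\alpha}\|p\|^{1/\alpha}\leq\|h(p)\|\leq C\|p\|^{\alpha}$, which is what I will use to transport the size constraint $\|\gamma_i\|\geq r$ through $h$.

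Next I would set $f(r)=f_{\Gamma_1,\Gamma_2}(r)$ and $g(s)=f_{h(\Gamma_1),h(\Gamma_2)}(s)$, and observe that $f$ is non-decreasing in $r$, since enlarging $r$ shrinks the admissible set in the infimum. The key geometric step is a single inequality between $g$ and $f$. Given $\gamma_i'\in h(\Gamma_i)$ with $\|\gamma_i'\|\geq s$, write $\gamma_i'=h(\gamma_i)$; the upper bound above forces $\|\gamma_i\|\geq (s/C)^{1/\alpha}$, while the lower bound gives $\|\gamma_1'-\gamma_2'\|\geq C^{-1/\alpha}\|\gamma_1-\gamma_2\|^{1/\alpha}$. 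Using monotonicity of $f$ at the radius $(s/C)^{1/\alpha}$ and taking the infimum over all such pairs, I obtain
\[
g(s)\;\geq\;C^{-1/\alpha}\,\bigl[f\bigl((s/C)^{1/\alpha}\bigr)\bigr]^{1/\alpha}.
\]

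Finally I would take logarithms, divide by $\ln s<0$ (which reverses the inequality), and let $s\to 0^+$. Writing $r=(s/C)^{1/\alpha}$ so that $\ln s=\alpha\ln r+\ln C$, the additive constant $\ln C$ is negligible against $\ln r\to-\infty$, and the ratio $\ln f(r)/\ln s$ converges to $Cont(\Gamma_1,\Gamma_2)/\alpha$; hence $Cont(h(\Gamma_1),h(\Gamma_2))\leq \alpha^{-2}\,Cont(\Gamma_1,\Gamma_2)$, which is the left-hand inequality of the statement. The right-hand inequality then follows by symmetry: since $h^{-1}$ is also a bi-$\alpha$-H\"older homeomorphism and $h^{-1}(h(\Gamma_i))=\Gamma_i$, applying the inequality just proved to $h^{-1}$ and the subsets $h(\Gamma_1),h(\Gamma_2)$ gives $Cont(\Gamma_1,\Gamma_2)\leq \alpha^{-2}\,Cont(h(\Gamma_1),h(\Gamma_2))$.

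The routine manipulations are harmless; the step that needs genuine care is the radius bookkeeping in the geometric estimate. The map $h$ does not preserve norms, so the constraint $\|\gamma_i'\|\geq s$ translates only into $\|\gamma_i\|\geq (s/C)^{1/\alpha}$, and it is precisely the monotonicity of $f$ that lets me evaluate $f$ at this shifted radius and still bound $\|\gamma_1-\gamma_2\|$ from below. I also expect to phrase the limit argument with care (or via $\limsup$ and $\liminf$) so as to accommodate the possibility $Cont=+\infty$, and to check that the two exponents $1/\alpha$ arising separately from the distance distortion and from the radius distortion combine to give exactly the factor $\alpha^{-2}$.
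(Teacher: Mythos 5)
Your proposal is correct and follows essentially the same route as the paper: both derive the two-sided H\"older estimate, transport the radius constraint $\|h(\gamma_i)\|\geq s$ into $\|\gamma_i\|\geq (s/C)^{1/\alpha}$ to compare $f_{h(\Gamma_1),h(\Gamma_2)}(s)$ with $f_{\Gamma_1,\Gamma_2}\bigl((s/C)^{1/\alpha}\bigr)^{1/\alpha}$, pass to logarithms to obtain the factor $\alpha^{-2}$, and get the other inequality by applying the same argument to $h^{-1}$. Your explicit attention to the radius bookkeeping and to the $+\infty$ case is a slight refinement of the paper's phrasing, but not a different argument.
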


\begin{proof} Let $h\colon (X,0)\rightarrow (Y,0)$ be an $\alpha$-H\"older homeomorphism, in other words, for some positive constant $c$, we suppose that the homeomorphism $h$ satisfies:
$$\frac{1}{c}\| p-q \|^{\frac{1}{\alpha}} \leq \| h(p) - h(q) \| \leq c \| p- q \|^{\alpha} \ \forall p,q\in X.$$
Given $r>0$ sufficiently small, let us consider $\gamma_i\in\Gamma_i$ ($i=1,2$ ) such that
$$f_{h(\Gamma_1),h(\Gamma_2)}(r)= \| h(\gamma_1) - h(\gamma_2) \| $$ with $\|  h(\gamma_1)\|\geq r \ \mbox{and} \ \| h(\gamma_2) \| \geq r.$  Therefore,
\begin{eqnarray*}
f_{h(\Gamma_1),h(\Gamma_2)}(r) &=& \| h(\gamma_1) - h(\gamma_2) \| \\
& \geq &\frac{1}{c}\| \gamma_1 - \gamma_2\|^{\frac{1}{\alpha}} \\
& \geq & \frac{1}{c}[f_{\Gamma_1,\Gamma_2}(u)]^{\frac{1}{\alpha}} \ \mbox{where} \ cu^{\alpha}=r \\
\end{eqnarray*}
and

$$ \frac{\ln f_{h(\Gamma_1),h(\Gamma_2)}(r) }{\ln r} \leq \frac{\ln f_{\Gamma_1,\Gamma_2}(u) }{\alpha^2\ln u + \alpha\ln c} - \frac{\ln c}{\alpha\ln u + \ln c}.$$ Finally, taking $r\to 0^+$ in the last inequality, we get
$Cont(h(\Gamma_1),h(\Gamma_2))\leq \frac{1}{\alpha^2}Cont(\Gamma_1,\Gamma_2).$

In order to show that $Cont(\Gamma_1,\Gamma_2)\leq \frac{1}{\alpha^2}Cont(h(\Gamma_1),h(\Gamma_2))$, we follow a similar way using $h^{-1}$ instead $h$.

\end{proof}


\section{Plane branches}

Let $C$ be the germ of an analytically irreducible complex  curve at $0\in \mathbb{C}^{2}$ (plane branch). We know that, up to an analytic changing of coordinates, one may suppose that $C$ has a parametrization as follows:

\begin{eqnarray}\label{e1}
x &=&t^{n} \\
y &=&a_1t^{m_1}+a_{2}t^{m_{2}}+\cdots   \notag
\end{eqnarray}

where $a_1\neq 0$, $n$ is the multiplicity of $C$  and $y(t)\in\mathbb{C}\left\{t\right\}$. In the case that $0$ is a singular point of the curve, $n$ does not divide the integer number $m_1$.

The series  $y\left(x^{1/n}\right) $ with fractional exponents is known as \emph{Newton-Puiseux parametrization} of $C$ and
any other Newton-Puiseux parametrization of  $C$
is obtained from the parametrization above via $x^{1/n}\rightarrow wx^{1/n}$ where $w\in\C$ is an $n$th root of the unit.

Let us denote $\beta _{0}=n \mbox{ and }\beta _{1}=m_1$.
Let $e_{1}=gcd\hspace{0.02in}( \beta _{1},\beta _{0}) $ be the great commun divisor of these two integers. Now, we denote by $\beta _{2}$ the smaller exponent appearing in the series $y\left( t\right) $ that is not multiple of $%
e_{1} $. Let $e_{2}=gcd\hspace{0.02in}( e_{1},\beta _{2}) $; and  $%
e_{2}<e_{1}$, and so on. Let us suppose that we have defined  $e_{i}$ $%
=gcd\hspace{0.02in}( e_{i-1},\beta _{i}) $. Thus, we define $\beta _{i+1}$ as the smaller exponent of the series $y\left( t\right) $ that is not multiple of $e_{i} $. Since the sequence of positive integers
\begin{equation*}
n>e_{1}>\cdots >e_{i}>\cdots
\end{equation*}%
is decreasing, there exists an integer number $g$ such that $e_{g}=1$ . In this way, we can rewrite Eq. \ref{e1} as follows:%
\begin{eqnarray*}
x &=&t^{n} \\
y &=&a_{\beta_1}t^{\beta _{1}}+a_{\beta _{1}+e_{1}}t^{\beta _{1}+e_{1}}+\cdots +a_{\beta
_{1}+k_{1}e_{1}}t^{\beta _{1}+k_{1}e_{1}} \\
&&+a_{\beta _{2}}t^{\beta _{2}}+a_{\beta _{2}+e_{2}}t^{\beta
_{2}+e_{2}}+\cdots +a_{\beta _{q}}t^{\beta _{q}}+a_{\beta
_{q}+e_{q}}t^{\beta _{q}+e_{q}}+\cdots \\
&&+a_{\beta _{g}}t^{\beta _{g}}+a_{\beta _{g}+1}t^{\beta _{g}+1}+\cdots
\end{eqnarray*}%
where the coefficient of $t^{\beta _{i}}$ is nonzero ($1\leq i\leq g$). Now, we define the integers  $m_{i}$ and $n_i$ via the following equations:
\begin{eqnarray*}
e_{i-1} &=&n_{i}e_{i} \\
\beta _{i} &=&m_{i}e_{i}
\end{eqnarray*}%
Thus, one may expand $y$ as a fractional power series of $x$ in the following way:%
\begin{eqnarray*}
y\left( x^{1/n}\right)  &=&a_{\beta _{1}}x^{\tfrac{m_{1}}{n_{1}}}+a_{\beta
_{1}+e_{1}}x^{\tfrac{m_{1}+1}{n_{1}}}+\cdots +a_{\beta _{1}+k_{1}e_{1}}x^{%
\tfrac{m_{1}+k_{1}}{n_{1}}} \\
&&+a_{\beta _{2}}x^{\tfrac{m_{2}}{n_{1}n_{2}}}+a_{\beta _{2}+e_{2}}x^{\tfrac{%
m_{2}+1}{n_{1}n_{2}}}+\cdots +a_{\beta _{q}}x^{\tfrac{m_{q}}{%
n_{1}n_{2}\ldots n_{q}}}+a_{\beta _{q}+e_{q}}x^{\tfrac{m_{q}+1}{%
n_{1}n_{2}\ldots n_{q}}}+\cdots  \\
&&+a_{\beta _{g}}x^{\tfrac{m_{g}}{n_{1}n_{2}\ldots n_{g}}}+a_{\beta
_{g}+e_{g}}x^{\tfrac{m_{g}+1}{n_{1}n_{2}\ldots n_{g}}}+\cdots
\end{eqnarray*}%
The sequence of integers $\left( \beta_{0}, \beta_{1},\beta _{2},\ldots ,\beta _{g}\right) $ is called  the \emph{characteristic exponents} of  $(C,0)$, and the sequence $%
\left( m_{1},n_{1}\right), \ldots, \left( m_{g},n_{g}\right) $ \ is called the \emph{characteristic pairs} of  $(C,0)$.

\begin{remark}\label{remark2.4}
\emph {Any plane branch $C$  with characteristic exponents $\left(\beta_{0}, \beta
_{1},\beta _{2},\ldots ,\beta _{g}\right) $, is bi-Lipschitz homeomorphic to another analytic plane branch parametrized in the following way:
\begin{eqnarray*}
y\left( x^{1/n}\right) &=&a_{\beta_{1}}x^{\tfrac{m_{1}}{n_{1}}}+
a_{\beta _{2}}x^{\tfrac{m_{2}}{n_{1}n_{2}}}+\cdots
+a_{\beta _{g}}x^{\tfrac{m_{g}}{n_{1}n_{2}\ldots n_{g}}}
\end{eqnarray*}%
}
\end{remark}

\section{Main results}

Let us begin this section by stating one of the main results of the paper.

\begin{theorem}\label{main-theorem}
Let $C$ and $\tilde{C}$ be complex analytic plane branches. If $C$ and $\tilde{C}$ have different sequence of characteristic exponents, then there exists $0<\alpha<1$ such that $C$ is not $\alpha$-H\"older homeomorphic to $\tilde{C}$. In particular, the branches are not  Lipschitz homeomorphic.
\end{theorem}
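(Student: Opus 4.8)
The plan is to convert the difference of characteristic exponents into a difference of contact numbers and then feed this into Proposition \ref{1}. First I would use Remark \ref{remark2.4} to replace $C$ and $\tilde{C}$, up to bi-Lipschitz homeomorphism, by branches carrying only their characteristic monomials; since a bi-Lipschitz map is bi-$\alpha$-H\"older for every $\alpha$, nothing is lost. For such a branch, writing $\omega=e^{2\pi i/\beta_0}$, I would single out the $\beta_0$ half-branches (Puiseux sheets) $A_j(s)=(s^{\beta_0},y(\omega^j s))$, $s\ge 0$, and record the pairwise numbers $Cont(A_j,A_k)$. A direct computation with the Puiseux series shows $Cont(A_j,A_k)=\beta_i/\beta_0$, where $\beta_i$ is the least characteristic exponent with $e_{i-1}\nmid (j-k)$; in particular the values that occur are exactly $\beta_1/\beta_0<\cdots<\beta_g/\beta_0$, and the number of sheets lying within contact $\ge \beta_i/\beta_0$ of a fixed one is $\beta_0/e_{i-1}$. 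Thus the sheet configuration packages the ratios $\beta_i/\beta_0$ together with the multiplicities governed by $n_i=e_{i-1}/e_i$, and from this data one recovers $\beta_0$ (as the least common multiple of the denominators of the $\beta_i/\beta_0$) and hence the whole sequence $(\beta_0,\dots,\beta_g)$.

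Next I would organise these numbers into quantities that are distorted by at most the factor $\alpha^{2}$ of Proposition \ref{1}. For each $k$ let $\mu_k(C)$ be the supremum of $\min_{i\ne j}Cont(\Gamma_i,\Gamma_j)$ over families $\Gamma_1,\dots,\Gamma_k\subset C$ of closed sets meeting pairwise only at $0$. If $h\colon C\to\tilde{C}$ is a bi-$\alpha$-H\"older homeomorphism then $h(\Gamma_1),\dots,h(\Gamma_k)$ is again such a family, so Proposition \ref{1} yields $\alpha^{2}\le \mu_k(C)/\mu_k(\tilde{C})\le \alpha^{-2}$ for every $k$. Hence, were $C$ and $\tilde{C}$ bi-$\alpha$-H\"older homeomorphic for all $0<\alpha<1$, letting $\alpha\to 1$ would force $\mu_k(C)=\mu_k(\tilde{C})$ for all $k$; provided each $\mu_k$ is computed by the sheet configuration above, the step function $k\mapsto \mu_k$ determines the ratios and their multiplicities, so the two branches would share the same characteristic exponents. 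The theorem is the contrapositive: distinct sequences force $\mu_k(C)\ne\mu_k(\tilde{C})$ for some $k$, and then every $\alpha$ with $\alpha^{2}>\min\{\mu_k(C),\mu_k(\tilde{C})\}/\max\{\mu_k(C),\mu_k(\tilde{C})\}$, in particular every $\alpha$ near $1$, is excluded.

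The hard part is precisely the proviso that $\mu_k$ is finite and is realised by the $\beta_0$ genuine sheets rather than by a degenerate family. Two subsets of a single branch can be given arbitrarily large contact by letting one hug a sheet of the other to high order: replacing the sheet $t=s$ by $t=s+i\,c\,s^{q}$ keeps the arc on $C$, meets the original sheet only at $0$, yet produces contact $1+(q-1)/\beta_0$, which tends to infinity with $q$. So the naive suprema are infinite, and the essential work is to remove these pathological high-tangency families from the competition, that is, to isolate from $C$ its intrinsic inter-sheet contact spectrum. This is the one point where the irreducible case is genuinely harder than the several-branch case, in which the branches furnish a canonical finite splitting to which Proposition \ref{1} applies verbatim. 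I expect to close the gap either by an inductive \emph{peeling} of the outermost characteristic pair $\beta_g/\beta_0$ followed by passage to a branch of lower genus, or by upgrading ``bi-$\alpha$-H\"older for all $\alpha<1$'' to an honest bi-Lipschitz homeomorphism and invoking the Pham--Teissier theorem (\cite{P-T}, together with its versions in \cite{F} and \cite{N-P}).
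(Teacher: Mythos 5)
Your proposal stalls at exactly the point you flag yourself, and neither of your two suggested repairs closes the gap. The invariant $\mu_k(C)=\sup\min_{i\ne j}Cont(\Gamma_i,\Gamma_j)$ over families of closed subsets is, as you observe, $+\infty$ for every branch, so the entire mechanism of your second paragraph is vacuous as written: there is no finite ``intrinsic inter-sheet contact spectrum'' to compare until you have specified a class of competitors that a mere homeomorphism is guaranteed to preserve, and arbitrary closed subsets do not qualify. Of the two escape routes you propose, \emph{peeling} is only a name, not an argument; and upgrading ``bi-$\alpha$-H\"older for all $\alpha<1$'' to bi-Lipschitz is both circular (for analytic plane curves this upgrade is Corollary \ref{application1}, which the paper derives \emph{from} Theorem \ref{main-theorem}) and false for general germs, as the paper's closing example with $y=|x\log|x||$ shows. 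Note also that even if it worked it would only give the qualitative contrapositive, not the quantitative threshold $\alpha_0$ that the theorem asserts.

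The paper circumvents the difficulty in a different way, and it is worth seeing how. It never defines a contact invariant of the branch alone. Instead (Lemmas \ref{lemma3.2} and \ref{lemma3.0}) it fixes four explicit real-analytic arcs $\Sigma_1,\dots,\Sigma_4$ on one curve: $\Sigma_1,\Sigma_3$ lie over the positive real $x$-axis and have high contact $\frac{m_j}{n_1\cdots n_j}$ governed by a characteristic exponent, while $\Sigma_2,\Sigma_4$ lie over the imaginary $x$-axis, so that any point of $\Sigma_1$ and any point of $\Sigma_2$ at comparable norms are separated by $\gtrsim\|x\|$; a dichotomy on the arguments $\sigma_k(r)$ of the image points guarantees that (after passing to a subsequence) the image of one of the transverse arcs is at least as close to the image of $\Sigma_1$ as the image of $\Sigma_3$ is. Feeding the upper bound from Proposition \ref{1} (contact distorted by at most $\alpha^2$) against the elementary lower bound $r^{1/\alpha}$ yields $k\,\alpha^2\le\alpha^{-2}$ for the explicit rational $k$ of \eqref{kij}, which is the desired contradiction for $\alpha^4>1/k$. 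In other words, the pathological high-tangency families you worry about are never excluded from a supremum; they are simply never allowed to enter, because the argument only ever evaluates contacts of a hand-picked quadruple and its image. Your first paragraph (the sheet computation $Cont(A_j,A_k)=\beta_i/\beta_0$ and the reduction via Remark \ref{remark2.4}) is correct and consistent with the paper, but without a substitute for the four-arc transversality trick the proof is not complete.
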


The next example give us an idea  how to get a proof of Theorem \ref{main-theorem}.
\begin{example}
Let $ C: y^{2} = x^{5}$ and $\tilde{C}: y^{2} = x^{3}.$ There is no a bi-$\frac{4}{5}$-H\"{o}lder homeomorphism $F\colon\left( C,0\right) \rightarrow (\tilde{C},0).$
\end{example}

\begin{proof} Let us suppose that there is a  bi-$\frac{4}{5}$-H\"{o}lder homeomorphism $F\colon\left( C,0\right) \rightarrow(\tilde{C},0)$. Let $\Sigma _{1},\Sigma _{2},\Sigma _{3},\Sigma _{4}
$ be the following real arcs in  $(C,0)$:%
\begin{equation*}
\Sigma _{1}=\{(r,r^{5/2}):r\geq 0\}; \ \ \ \ \ \ \ \ \ \ \ \ \ \ \ \
\end{equation*}%
\begin{equation*}
\Sigma _{2}=\{(ri,r^{5/2}e^{i\left( 5/2\right) \pi /2}):r\geq 0\};\ \ \ \
\end{equation*}%
\begin{equation*}
\Sigma _{3}=\{(r,-r^{5/2}):r\geq 0\};\ \ \ \ \ \ \ \ \ \ \ \ \ \
\end{equation*}%
\begin{equation*}
\Sigma _{4}=\{(-ri,r^{5/2}e^{i\left( 5/2\right) 11\pi /2}):r\geq 0\},
\end{equation*}%
Let us define \textbf{:}%
\begin{equation*}
\Gamma _{k}\left( r\right) =(re^{i\gamma _{k}\left( r\right)
},r^{3/2}e^{i\left( 3/2\right) \gamma _{k}\left( r\right)
})\in F\left( \Sigma _{k}\right) ;\text{ }k=1,2,3,4
\end{equation*}%
and,
\begin{equation*}
r_{k,n}=\|F(\Sigma_k(r_n^{\frac{1}{\alpha}}))\|, \mbox{ para } k=1,2 \mbox{ and }3.
\end{equation*}

It comes from Proposition \ref{1}
\begin{equation*}
\left\Vert \Gamma _{1}\left( r_{1,n}\right) -\Gamma _{3}\left( r_{3,n}\right)
\right\Vert \lesssim( r_n)^{2\alpha^2}.
\end{equation*}%

We know that either
$$|\gamma _{1}\left( r\right) -\gamma _{2}\left( r\right) |\leq |\gamma _{1}\left( r\right) -\gamma _{3}\left( r\right) |$$
or
$$|\gamma _{1}\left( r\right) -\gamma _{4}\left( r\right) |\leq |\gamma _{1}\left( r\right) -\gamma _{3}\left( r\right) |,$$
for any $r>0$. Thus, up to subsequences, one can suppose, for instance, that
\begin{equation*}
|\gamma _{1}\left( r_{1,n}\right) -\gamma _{2}\left( r_{3,n}\right) |\leq |\gamma _{1}\left( r_{1,n}\right) -\gamma _{3}\left( r_{3,n}\right) |, \forall n,
\end{equation*}
hence,
\begin{equation*}
\left\Vert \Gamma_{1}\left( r_{1,n} \right) -\Gamma_{2}\left( r_{3,n} \right)
\right\Vert \leq \left\Vert \Gamma_{1}\left( r_{1,n} \right) -\Gamma_{3}\left( r_{3,n} \right)
\right\Vert ,\forall n.
\end{equation*}%

By denoting $\delta _{j}(r_{k,n}\mathbf{)}=F^{-1}\left( \Gamma _{j}\left( r_{k,n}\right)
\right) ;j,k=1,2,3$, we get
\begin{equation*}
\delta _{1}(r_{1,n}\mathbf{)=(}h\left( r_{1,n}\right) ,h\left( r_{1,n}\right) ^{5/2})
\end{equation*}
and
\begin{equation*}
\delta _{2}(r_{2,n}\mathbf{)=(}%
ig\left( r_{2,n}\right) ,g\left( r_{2,n}\right) ^{5/2}e^{i\left( 5/2\right) \pi
/2}),
\end{equation*}%
where $(r_n)^{\tfrac{1}{\alpha }}\lesssim \left\vert h\left( r_{1,n}\right) \right\vert
\approx \left\vert g\left( r_{2,n}\right) \right\vert \lesssim (r_n)^{\alpha }.$
Hence,%
\begin{equation*}
\left\vert \delta _{1}\left( r_{1,n}\right) -\delta _{2}\left( r_{2,n}\right)
\right\vert \gtrsim \left\vert h\left( r_{1,n}\right) -ig\left( r_{2,n}\right)
\right\vert \gtrsim (r_n)^{\tfrac{1}{\alpha }}\ .
\end{equation*}%
Therefore,
\begin{eqnarray*}
(r_n)^{\tfrac{1}{\alpha^2 }} &\lesssim &  \left\Vert \delta _{1}\left( r_{1,n}\right)
-\delta _{2}\left( r_{2,n}\right) \right\Vert ^{\tfrac{1}{\alpha }} = (f_{\delta_1,\delta_3}(r))^{\tfrac{1}{\alpha}} \leq \left\Vert
\delta _{1}\left( r_{1,n}\right) -\delta _{2}\left( r_{3,n}\right) \right\Vert ^{\tfrac{1}{\alpha }} \\ &=& \left\Vert
F^{-1}\left( \Gamma _{1}\left( r_{1,n}\right)\right)) - F^{-1}\left(\Gamma _{2}\left( r_{3,n}\right)\right) \right\Vert ^{\tfrac{1}{\alpha }} \leq \left\Vert
\Gamma _{1}\left( r_{1,n}\right) -\Gamma _{2}\left( r_{3,n}\right) \right\Vert \\  &\leq &  \left\Vert
\Gamma _{1}\left( r_{1,n}\right) -\Gamma _{3}\left( r_{3,n}\right) \right\Vert \lesssim
(r_n)^{2\alpha^2 }
\end{eqnarray*}%
and, this implies $ 2\alpha^2\leq \frac{1}{\alpha^2}.$ Then, $\alpha^4 \leq \frac{1}{2}< \frac{4}{5}$, what is a contradiction.

The other cases are analyzed in a completely similar way.
\end{proof}

In the following, we are going to generalize what was proved in the example above. Let  $C$ and $\tilde{C}$ be branches of complex analytic plane curves at $0\in\C^2$ with the following characteristic pairs
$\left( n_{1},m_{1}\right), \left( n_{2},m_{2}\right) ,\ldots ,\left(n_{g},m_{g}\right)$ and $\left(q_{1},l_{1}\right)$, $\left( q_{2},l_{2}\right)$,$\ldots$ ,$\left( q_{\tilde{g}},l_{\tilde{g}}\right)$ respectively. Before the next result, let us define the following rational number
\begin{equation}\label{kij}
k_{ij}(C,\tilde C)=\min \{\dfrac{m_{j}.q_{1}\ldots q_{i}+l_{i}.n_{1}.\ldots n_{j}}{2.l_{i}.n_{1}.\ldots n_{j}},\dfrac{m_{j}.q_{1}\ldots q_{i}+l_{i}.n_{1}.\ldots n_{j}}{2.m_{j}.q_{1}\ldots q_{i}}\}.
\end{equation}

\begin{lemma}
\label{lemma3.2} If $g\neq \tilde{g}$ and $\alpha _{0}$ is a positive real number such that
 $$\max \{k_{ij}(C,\tilde{C});i=\tilde{g}\leq j\leq g\hspace{0.1in}\mbox{or}\hspace{0.1in}j=g\leq i\leq \tilde{g}\}<\alpha _{0}^{4}<1,$$
 then there is not any bi-$\alpha $-H\"{o}lder homeomorphism $F\colon\left( C,0\right) \rightarrow (\tilde{C},0),$ with $\alpha_{0}<\alpha <1.$
\end{lemma}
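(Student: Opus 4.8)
The plan is to promote the computation of the preceding example to the level of the characteristic contacts. First I would record the geometric meaning of the numbers appearing in $k_{ij}(C,\tilde C)$: dividing numerator and denominator of each entry by $(n_1\cdots n_j)(q_1\cdots q_i)$ one sees that
$$k_{ij}(C,\tilde C)=\frac{A_j+B_i}{2\max\{A_j,B_i\}},\qquad A_j=\frac{m_j}{n_1\cdots n_j},\quad B_i=\frac{l_i}{q_1\cdots q_i},$$
and these $A_j,B_i$ are exactly the characteristic exponents $\beta_j/n$ of $C$ and $\tilde\beta_i/\tilde n$ of $\tilde C$ read off in the variable $x$. They are the only values that can occur as $Cont(\Sigma,\Sigma')$ for two distinct conjugate real arcs $\Sigma,\Sigma'$ on the respective branch: two such arcs that first disagree at the $j$-th characteristic level have contact $A_j$ (resp.\ $B_i$). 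After the normalization of Remark \ref{remark2.4} I may assume both branches are given by their truncated monomial Puiseux expansions, so that all these contacts are realized by explicit semialgebraic arcs $x=se^{i\theta}$ ($s\ge0$) together with a choice of $y$-sheet.

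The argument is by contradiction: assume a bi-$\alpha$-H\"older homeomorphism $F\colon(C,0)\to(\tilde C,0)$ with $\alpha_0<\alpha<1$. Since $g\neq\tilde g$, I may assume $g>\tilde g$ (the case $g<\tilde g$ is symmetric, using $F^{-1}$), so that the only non-vacuous indices in the maximum are $i=\tilde g$ with $\tilde g\le j\le g$; thus $B_{\tilde g}$ is the \emph{deepest} contact available in $\tilde C$, while $C$ still possesses the strictly finer contacts $A_{\tilde g}<A_{\tilde g+1}<\cdots<A_g$. I would then reproduce the arc configuration of the example: choose a finite family of real arcs $\Sigma_1,\dots,\Sigma_N\subset C$ that pairwise realize a chosen deep contact $A_j$ (for the critical index $j$ attaining the maximum) but sit over several distinct $x$-rays, so that some pairs are angularly separated. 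Writing each image $\Gamma_k:=F(\Sigma_k)\subset\tilde C$ in Newton--Puiseux form $\Gamma_k(r)=(re^{i\gamma_k(r)},\,r^{B_1}e^{iB_1\gamma_k(r)}+\cdots)$, parametrized by the norm $r$ of its $x$-coordinate, reduces the whole question to the behaviour of the angular functions $\gamma_k(r)$.

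The engine is then a pigeonhole on these angles exactly as in the example: comparing $|\gamma_1-\gamma_k|$ for the various $k$, at least one pair of images is forced to be, angularly, at least as close as a reference pair, hence to have contact in $\tilde C$ equal to the maximal value $B_{\tilde g}$. Setting $r_{k,n}=\|F(\Sigma_k(r_n^{1/\alpha}))\|$ and feeding the bi-H\"older inequalities $\frac1c\|p-q\|^{1/\alpha}\le\|F(p)-F(q)\|\le c\|p-q\|^{\alpha}$ into the contact estimate of Proposition \ref{1}, I would chain a lower bound coming from the contact $A_j$ of the arcs $\delta_k:=F^{-1}(\Gamma_k)$ in $C$ against an upper bound coming from the contact $B_{\tilde g}$ of the arcs $\Gamma_k$ in $\tilde C$. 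Because these two bounds are measured at the two mismatched radii $r_{k,n}$ (whose exponents interpolate between $r_n^{1/\alpha^2}$ and $r_n$), the resulting inequality is not the crude $\alpha^4\le(\min/\max)^2$ of Proposition \ref{1} alone but $\alpha^4\le \dfrac{A_j+B_{\tilde g}}{2\max\{A_j,B_{\tilde g}\}}=k_{\tilde g,j}(C,\tilde C)$, the $\max$ in the denominator arising automatically from the shallow-versus-deep asymmetry of the two branches. Taking the critical $j$ realizing the maximum contradicts $\alpha^4>\alpha_0^4>\max_j k_{\tilde g,j}$.

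The step I expect to be the main obstacle is precisely this last calibration: tracking the two distinct radii $r_{k,n}$ and the angular functions $\gamma_k$ simultaneously, so that the averaging of the two H\"older exponents produces the exact threshold $k_{ij}$ rather than the weaker bound that Proposition \ref{1} gives directly. Subsidiary care is needed to (i) guarantee that the angular pigeonhole can always be arranged to land on the maximal contact $B_{\tilde g}$ of the shallow branch --- this is where the hypothesis $g\neq\tilde g$, i.e.\ the genuine difference in depth of the two contact trees, is used --- and (ii) check that the normalization of Remark \ref{remark2.4} does not affect the contacts, so that the explicit arcs really do realize the values $A_j$. The symmetric case $g<\tilde g$ then follows by running the same argument with $F^{-1}$, which is why the maximum in the statement also ranges over $j=g\le i\le\tilde g$.
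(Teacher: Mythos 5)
Your proposal follows essentially the same route as the paper's proof: after normalizing via Remark \ref{remark2.4} and reducing to $g>\tilde g$, the paper also builds the four-arc configuration of the worked example (two arcs realizing the deepest contact $l_{\tilde g}/\tilde n$ of the shallower branch, two over perpendicular rays), applies the same angular pigeonhole, and chains the bi-H\"older estimates at the mismatched radii $r_{k,n}$ to land on $\alpha^4\leq \frac{2}{A_j+B_{\tilde g}}<k_{\tilde g,j}(C,\tilde C)$, which is exactly the calibration you flag as the main obstacle (the only cosmetic difference being that the paper places the explicit arcs $\Gamma_k$ on $\tilde C$ and pulls them back by $F^{-1}$, the mirror image of your choice). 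The case analysis on the relative order of $l_{\tilde g}/\tilde n$ and the $m_j/(n_1\cdots n_j)$ that you sketch is likewise present in the paper.
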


\begin{proof} Without loss of generality, we can suppose that $(C,0)$ and $(\tilde{C},0)$  are parametrized as in Remark \ref{remark2.4} and let us suppose that $g>\tilde{g}.$ In this way, we have the following three cases:

\begin{enumerate}
\item $\dfrac{l_{\tilde{g}}}{\tilde{n}}\leq \dfrac{m_{j}}{n_{1}.\ldots n_{j}},$
$\forall \tilde{g}\leq j\leq g;$

\item $\dfrac{m_{j}}{n_{1}.\ldots n_{j}}\leq \dfrac{l_{\tilde{g}}}{\tilde{n}},$
$\forall \tilde{g}\leq j\leq g;$

\item $\exists $ $j_{0},$ $\tilde{g}\leq j_{0}\leq g$ such that $\dfrac{m_{j}}{%
n_{1}.\ldots n_{j}}\leq \dfrac{l_{\tilde{g}}}{\tilde{n}},\forall \tilde{g}%
\leq j\leq j_{0} \mbox{ and } \dfrac{l_{\tilde{g}}}{%
\tilde{n}}<\dfrac{m_{j}}{n_{1}.\ldots n_{j}},$ $\forall j_{0}<j\leq g.$
\end{enumerate}

Notice that for any of the above cases, there exists at most one index $j$ such tat $\dfrac{l_{%
\tilde{g}}}{\tilde{n}}=\dfrac{m_{j}}{n_{1}.\ldots n_{j}}$. So, we are going to consider just $j$ such that $\dfrac{l_{\tilde{g}}}{\tilde{n}%
}\neq \dfrac{m_{j}}{n_{1}.\ldots n_{j}}.$ For instance, let us suppose that $\dfrac{l_{\tilde{g}}}{%
\tilde{n}}>\dfrac{m_{j}}{n_{1}.\ldots n_{j}},$ $\tilde{g}\leq j\leq g.$ In this case, let us consider $\Gamma_{1},\Gamma_{2},\Gamma_{3},\Gamma_{4}$ the following arcs on  $(\tilde{C},0) :$%
\begin{equation*}
\begin{array}{c}
\Gamma_{1}=\{(r,b_{\beta _{1}}r^{\tfrac{l_{1}}{q_{1}}}+b_{\beta _{2}}r^{\tfrac{l_{2}}{q_{1}q_{2}}}+\cdots \text{}
+\,b_{\beta _{g}}r^{\tfrac{l_{\tilde{g}}}{\tilde{n}}} ):r\geq 0\};\ \ \ \ \ \ \ \ \ \ \ \ \ \ \ \ \ \ \ \ \ \ \ \ \ \ \ \ \ \ \ \ \ \ \ \ \ \
\end{array}%
\end{equation*}%
\begin{equation*}
\begin{array}{c}
\Gamma_{2}=\{(ri,b_{\beta _{1}}r^{\tfrac{l_{1}}{q_{1}}}e^{i\left( \tfrac{l_{1}}{q_{1}}\right)\tfrac{\pi}{2}}+\,b_{\beta _{2}}r^{\tfrac{l_{2}}{q_{1}q_{2}}}e^{i\left( \tfrac{l_{2}}{q_{1}q_{2}}\right) \tfrac{\pi}{2}}
+\cdots +\,b_{\beta _{\tilde{g}}}r^{\tfrac{l_{\tilde{g}}}{\tilde{n}}}e^{i\left( \tfrac{l_{\tilde{g}}}{\tilde{n}}\right) \tfrac{\pi}{2}}):r\geq 0\};\ \ \ \ \
\end{array}%
\end{equation*}%
\begin{equation*}
\begin{array}{c}
\Gamma_{3}=\{(r,b_{\beta _{1}}r^{\tfrac{l_{1}}{q_{1}}}+b_{\beta
_{2}}r^{\tfrac{l_{2}}{q_{1}q_{2}}}+\cdots +\,b_{\beta _{\tilde{g}-1}}r^{\tfrac{l_{\tilde{g}-1}}{q_{1}\ldots q_{\tilde{g}%
-1}}}
+\,b_{\beta _{\tilde{g}}}r^{\tfrac{l_{\tilde{g}}}{\tilde{n}}}e^{i\left( \tfrac{l_{\tilde{g}}}{\tilde{n}}\right) 2q_{1}\ldots q_{\tilde{g}-1}}):r\geq 0\};%
\end{array}%
\end{equation*}%
\begin{equation*}
\begin{array}{c}
\Gamma_{4}=\{(-ri,b_{\beta _{1}}r^{\tfrac{l_{1}}{q_{1}}}e^{i\left(\tfrac{l_{1}}{q_{1}}\right) \left( \tfrac{4q_{1}...q_{\tilde{g}-1}+3}{2}\right)\pi } +\,b_{\beta _{2}}r^{\tfrac{l_{2}}{q_{1}q_{2}}}e^{i\left( \tfrac{l_{2}}{q_{1}q_{2}}\right) \left( \tfrac{4q_{1}...q_{\tilde{g}-1}+3}{2}\right) \pi }+\cdots \text{ \ \ \ \ \ }\\
+\,b_{\beta _{\tilde{g}}}r^{\tfrac{l_{\tilde{g}}}{\tilde{n}}}e^{i\left( \tfrac{l_{\tilde{g}}}{\tilde{n}}\right)  \left( \tfrac{4q_{1}...q_{\tilde{g}-1}+3}{2}\right)\pi } ):r\geq 0\}. \ \ \ \ \ \ \ \ \ \ \ \ \ \ \ \ \ \ \ \ \ \ \ \ \ \ \ \ \ \ \ \ \
\end{array}%
\end{equation*}%
Let
\begin{equation*}
\begin{array}{c}
\Sigma_{k}\left( r\right) =(re^{i\sigma_{k}\left( r\right) },a_{\beta_{1}}r^{\tfrac{m_{1}}{n_{1}}}e^{i\left( \tfrac{m_{1}}{n_{1}}\right) \sigma_{k}\left( r\right)} +a_{\beta _{2}}r^{\tfrac{m_{2}}{n_{1}n_{2}}}e^{i\left( \tfrac{m_{2}}{n_{1}n_{2}}\right) \sigma_{k}\left( r\right) }
+\cdots \text{ \ \ \ \ \ \ \ \ \ \ \ \ \ \ \ \ \ \ \ \ \ \ \ \ \ }\\
\ \ \ \ \ \ \ \ \ \ +a_{\beta _{g}}r^{\tfrac{m_{g}}{n}}e^{i\left( \tfrac{m_{g}}{n}\right) \sigma_{k}\left(
r\right) } )\in F^{-1}\left( \Gamma_{k}\right) ;\text{ }k=1,2,3,4.\text{ \ \ \ \ \ \ \ \ \ \ \ \ \ \ \ \ }%
\end{array}%
\end{equation*}%

At this moment, let us take $\alpha_{0}<\alpha <1$ and suppose that there exists a  bi-$\alpha $-H\"{o}lder homeomorphism
$F\colon\left( C,0\right) \rightarrow (\tilde{C},0).$

Let us define
\begin{equation*}
r_{k,n}=\|F^{-1}(\Gamma_k(r_n^{\frac{1}{\alpha}}))\|, \mbox{ for } k=1,2 \mbox{ and } 3.
\end{equation*}

It comes from Proposition \ref{1} that:%

\begin{equation*}
\left\Vert
\Sigma _{1}\left( r_{1,n}\right) -\Sigma _{3}\left( r_{3,n}\right) \right\Vert \lesssim (r_n)^{\dfrac{l_{\tilde{g}}.n_{1}.\ldots n_{j}+m_{j}.\tilde{n}}{2.\tilde{n}.n_{1}.\ldots n_{j}}\alpha^2}
\end{equation*}%

Moreover, we know that either%
$$|\sigma_{1}\left( r\right) -\sigma_{2}\left( r\right) |\leq |\sigma_{1}\left( r\right) -\sigma_{3}\left( r\right) |$$
or
$$|\sigma_{1}\left( r\right) -\sigma_{4}\left( r\right) |\leq |\sigma_{1}\left( r\right) -\sigma_{3}\left( r\right) |,$$
$\forall \ r>0$. Up to a subsequence, we may suppose that
\begin{equation*}
|\sigma_{1}\left( r_{1,n}\right) -\sigma_{2}\left( r_{3,n}\right) |\leq |\sigma_{1}\left( r_{1,n}\right) -\sigma_{3}\left( r_{3,n}\right) |, \forall n.
\end{equation*}
$\therefore$
\begin{equation*}
\left\Vert \Sigma _{1}\left( r_{1,n}\right) -\Sigma _{2}\left( r_{3,n}\right)
\right\Vert \leq \left\Vert \Sigma _{1}\left( r_{1,n}\right) -\Sigma _{3}\left( r_{3,n}\right)
\right\Vert ,\forall n.
\end{equation*}%

Now, let us denote  $\delta _{j}(r_{k,n}\mathbf{)}=F\left( \Sigma_{j}\left( r_{k,n}\right)
\right) ;j,k=1,2,3$. Thus,
\begin{equation*}
\delta _{1}(r_{1,n}\mathbf{)=(}h\left( r_{1,n}\right) ,b_{\beta _{1}}h\left( r_{1,n}\right) ^{l_{1}/q_{1}}+\ldots) \
\end{equation*}
and
\begin{equation*}
\delta _{2}(r_{2,n}\mathbf{)=(}%
ig\left( r_{2,n}\right) ,b_{\beta_{1}}g\left( r_{2,n}\right) ^{l_{1}/q_{1}}e^{i\left( l_{1}/q_{1}\right) \pi
/2}+\ldots)
\end{equation*}%
where $$(r_n)^{\tfrac{1}{\alpha }}\lesssim \left\vert h\left( r_{1,n}\right) \right\vert
\approx \left\vert g\left( r_{2,n}\right) \right\vert \lesssim (r_n)^{\alpha }$$
$\therefore$%
$$
\left\vert \delta _{1}\left( r_{1,n}\right) -\delta _{2}\left( r_{2,n}\right)
\right\vert \gtrsim \left\vert h\left( r_{1,n}\right) -ig\left( r_{2,n}\right)
\right\vert \gtrsim (r_n)^{\tfrac{1}{\alpha }}\ .
$$
Hence,
\begin{eqnarray*}
(r_n)^{\tfrac{1}{\alpha^2 }} &\lesssim & \left\Vert \delta _{1}\left(r_{1,n} \right)-\delta _{2}\left( r_{2,n}\right) \right\Vert ^{\tfrac{1}{\alpha }} = (f_{\delta_1,\delta_2}(r))^{\tfrac{1}{\alpha}} \leq \left\Vert
\delta _{1}\left( r_{1,n}\right) -\delta _{2}\left( r_{3,n}\right) \right\Vert ^{\tfrac{1}{\alpha }} \\ &=& \left\Vert
F\left( \Sigma _{1}\left( r_{1,n}\right)\right)) - F\left(\Sigma _{2}\left( r_{3,n}\right)\right) \right\Vert ^{\tfrac{1}{\alpha }} \lesssim \left\Vert
\Sigma_{1}\left( r_{1,n}\right) -\Sigma_{2}\left( r_{3,n}\right) \right\Vert \\ &\leq &  \left\Vert
\Sigma _{1}\left( r_{1,n}\right) -\Sigma _{3}\left( r_{3,n}\right) \right\Vert  \lesssim
(r_n)^{\dfrac{l_{\tilde{g}}.n_{1}.\ldots n_{j}+m_{j}.\tilde{n}}{2.\tilde{n}.n_{1}.\ldots n_{j}}\alpha^2 }
\end{eqnarray*}%
and, this implies $$\dfrac{l_{\tilde{g}}.n_{1}.\ldots n_{j}+m_{j}.\tilde{n}}{2.\tilde{n}.n_{1}.\ldots n_{j}}\alpha^2\leq \frac{1}{\alpha^2}.$$ Then, $${\alpha^4}\leq \frac{2.\tilde{n}.n_{1}.\ldots n_{j}}{l_{\tilde{g}}.n_{1}.\ldots n_{j}+m_{j}.\tilde{n}}< \frac{l_{\tilde{g}}.n_{1}.\ldots n_{j}+m_{j}.\tilde{n}}{2.l_{\tilde{g}}.n_{1}.\ldots n_{j}},$$ what is a contradiction.

The other cases are analyzed in a completely similar way.
\end{proof}

\begin{lemma}
\label{lemma3.0}Let $(C,0)$ e $(\tilde{C},0)$ be two complex analytic plane branches with $g=\tilde{g}$. If $1>\alpha _{0}^{4}> k_{ii}(C,\tilde{C})\neq 1$, $1\leq i\leq g $ ,  then there is no  bi-$
\alpha $-H\"{o}lder homeomorphism $F\colon\left( C,0\right) \rightarrow (\tilde{C},0),$ $%
\forall $ $\alpha _{0}<\alpha <1.$
\end{lemma}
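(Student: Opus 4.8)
The plan is to run the argument of Lemma~\ref{lemma3.2} and of the Example above at the single level $i$ where the two branches disagree, instead of at the top level. I would first record what the hypothesis means arithmetically. Writing $e_C=\frac{m_i}{n_1\cdots n_i}$ and $e_{\tilde C}=\frac{l_i}{q_1\cdots q_i}$ for the $i$-th characteristic exponents of $C$ and $\tilde C$, a short computation (using $\frac{m_iq_1\cdots q_i}{l_in_1\cdots n_i}=e_C/e_{\tilde C}$) gives $k_{ii}(C,\tilde C)=\frac{e_C+e_{\tilde C}}{2\max\{e_C,e_{\tilde C}\}}$, so that $k_{ii}=1$ exactly when $e_C=e_{\tilde C}$. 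Thus the hypothesis $k_{ii}\neq 1$ says precisely that the $i$-th characteristic exponents of the two branches differ, and, after replacing $F$ by $F^{-1}$ if necessary (it is again $\alpha$-H\"older), I may assume $e_{\tilde C}>e_C$. Arguing by contradiction, I suppose a bi-$\alpha$-H\"older homeomorphism $F\colon(C,0)\to(\tilde C,0)$ with $\alpha_0<\alpha<1$ exists, and I parametrize both branches as in Remark~\ref{remark2.4}.

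I would then construct four arcs $\Gamma_1,\Gamma_2,\Gamma_3,\Gamma_4$ on $(\tilde C,0)$ as in Lemma~\ref{lemma3.2}, but performing all rotations at level $i$. Here $\Gamma_1$ lies over $x=r$, while $\Gamma_2$ and $\Gamma_4$ lie over $x=ri$ and $x=-ri$, so that $Cont(\Gamma_1,\Gamma_2)=Cont(\Gamma_1,\Gamma_4)=1$; and $\Gamma_3$ lies again over $x=r$ but on the conjugate branch obtained by carrying $x^{1/\tilde n}$ around $0$ through $q_1\cdots q_{i-1}$ full turns, i.e. its $k$-th characteristic term is multiplied by $e^{2\pi i\,l_k\,q_1\cdots q_{i-1}/(q_1\cdots q_k)}$. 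The point of this factor is that it equals $1$ for every $k<i$ (the exponent is then an integer) while it differs from $1$ for $k=i$ (it equals $e^{2\pi i\,l_i/q_i}$, and $q_i\nmid l_i$ since $\gcd(q_i,l_i)=1$ with $q_i\geq 2$). Hence $\Gamma_1$ and $\Gamma_3$ share all characteristic terms of level $<i$, these cancel in the difference, and the leading surviving term is at level $i$; therefore $\|\Gamma_1(r)-\Gamma_3(r)\|\approx r^{\,e_{\tilde C}}$ and $Cont(\Gamma_1,\Gamma_3)=e_{\tilde C}$.

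Pulling the arcs back by $F$, I set $\Sigma_k=F^{-1}(\Gamma_k)\subset C$ (each of the Puiseux shape of $C$, with an angular function $\sigma_k(r)$) and $r_{k,n}=\|F^{-1}(\Gamma_k(r_n^{1/\alpha}))\|$. Exactly as in Lemma~\ref{lemma3.2}, Proposition~\ref{1} lets me estimate the contact of the pulled-back arcs and produces $\|\Sigma_1(r_{1,n})-\Sigma_3(r_{3,n})\|\lesssim (r_n)^{K\alpha^2}$, where $K=\frac{l_i\,n_1\cdots n_i+m_i\,q_1\cdots q_i}{2\,(q_1\cdots q_i)(n_1\cdots n_i)}=\frac{e_C+e_{\tilde C}}{2}$ is the mean of the two $i$-th characteristic exponents (in the Example above this is the exponent $2$ appearing in $\|\Gamma_1-\Gamma_3\|\lesssim (r_n)^{2\alpha^2}$). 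The pigeonhole step---either $|\sigma_1-\sigma_2|\le|\sigma_1-\sigma_3|$ or $|\sigma_1-\sigma_4|\le|\sigma_1-\sigma_3|$, so that, after passing to a subsequence, $\|\Sigma_1(r_{1,n})-\Sigma_2(r_{3,n})\|\le\|\Sigma_1(r_{1,n})-\Sigma_3(r_{3,n})\|$---together with the lower H\"older bound for the $x$-separation of $\delta_1=F(\Sigma_1)$ and $\delta_2=F(\Sigma_2)$, which gives $(r_n)^{1/\alpha}\lesssim\|\delta_1(r_{1,n})-\delta_2(r_{2,n})\|$, lets me chain the inequalities as in Lemma~\ref{lemma3.2} into $(r_n)^{1/\alpha^2}\lesssim (r_n)^{K\alpha^2}$. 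Since $r_n\to 0^+$ this forces $K\alpha^4\le 1$, that is $\alpha^4\le 1/K$.

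Finally I would verify the elementary inequality $1/K\le k_{ii}(C,\tilde C)$, which closes the argument: combined with $\alpha^4\le 1/K$ it gives $\alpha^4\le k_{ii}$, contradicting $\alpha^4>\alpha_0^4>k_{ii}$. Indeed $1/K=\frac{2}{e_C+e_{\tilde C}}$ and $k_{ii}=\frac{e_C+e_{\tilde C}}{2\max\{e_C,e_{\tilde C}\}}$, so $1/K\le k_{ii}$ reduces to $4\max\{e_C,e_{\tilde C}\}\le(e_C+e_{\tilde C})^2$, which follows from $(\max\{e_C,e_{\tilde C}\}-1)^2\ge 0$ together with $e_C,e_{\tilde C}>1$ (every characteristic exponent exceeds $1$). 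I expect the genuine obstacle to be the arc construction at the interior level $i$: one must check that the monodromy rotation defining $\Gamma_3$ fixes every characteristic term of level $<i$ while moving the $i$-th one, so that $\Gamma_3$ remains a legitimate branch of $\tilde C$ with $Cont(\Gamma_1,\Gamma_3)=e_{\tilde C}$, and that the surviving lower-order terms never corrupt the leading estimates in the contact/pigeonhole chain---either because they cancel between the two arcs being compared, or because they are dominated by the order-$r$ separation in the $x$-coordinate.
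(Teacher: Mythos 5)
Your proposal is correct and follows essentially the same route as the paper's own proof: four arcs with the monodromy rotation performed at the level $i$ where the exponents differ, the pigeonhole on the angular functions, the chain of H\"older estimates yielding $\alpha^4\le 1/K$ with $K$ the mean of the two $i$-th exponents, and the comparison of $1/K$ with $k_{ii}(C,\tilde C)$. The only (immaterial) difference is that you place the explicit arcs on $\tilde C$ and pull back by $F^{-1}$, while the paper places them on $C$ and pushes forward; your added verifications (that the rotation fixes all characteristic terms of level $<i$ and moves the $i$-th, and that $4\max\{e_C,e_{\tilde C}\}\le(e_C+e_{\tilde C})^2$) are exactly the details the paper leaves implicit.
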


\begin{proof} Let $(C,0)$ and $(\tilde{C},0)$ be parametrized as in Remark \ref{remark2.4}. Let $1\leq j\leq g$ be such that
$\dfrac{l_{j}n_{1}\ldots n_{j}}{m_{j}q_{1}\ldots q_{j}}\neq 1.$ Let us suppose that $\dfrac{l_{j}n_{1}\ldots n_{j}}{m_{j}q_{1}\ldots q_{j}}<1$, that is
$\dfrac{l_{j}}{q_{1}\ldots q_{j}}<\dfrac{m_{j}}{n_{1}\ldots n_{j}}.$ Let us consider
$\Sigma_{1},\Sigma_{2},\Sigma_{3},\Sigma_{4}$ the following arcs in $C$:%
\begin{equation*}
\begin{array}{c}
\Sigma_{1}=\{(r,a_{\beta _{1}}r^{\tfrac{m_{1}}{n_{1}}}+\,a_{\beta _{2}}r^{\tfrac{m_{2}}{n_{1}n_{2}}}+\cdots
+\,a_{\beta_{j}}r^{\tfrac{m_{j}}{n_{1}\ldots n_{j}}}+\cdots  ):r\geq 0\};
\end{array}%
\end{equation*}%
\begin{equation*}
\begin{array}{c}
\Sigma_{2}=\{(ri,a_{\beta _{1}}r^{\tfrac{m_{1}}{n_{1}}}e^{i\left( \tfrac{m_{1}}{n_{1}}\right)\tfrac{\pi }{2}}+
\cdots +\,a_{\beta _{j}}r^{\tfrac{m_{j}}{n_{1}\ldots n_{j}}}e^{i\left( \tfrac{m_{j}}{n_{1}\ldots n_{j}}\right) \tfrac{\pi }{2}}+\cdots  ):r\geq 0\};
\end{array}%
\end{equation*}%
\begin{equation*}
\begin{array}{c}
\Sigma_{3}=\{(r,a_{\beta _{1}}r^{\tfrac{m_{1}}{n_{1}}}+\cdots +\,a_{\beta _{j-1}}r^{\tfrac{m_{j-1}}{n_{1}\ldots n_{j-1}}}
+\,a_{\beta _{j}}r^{\tfrac{m_{j}}{n_{1}\ldots n_{j}}}e^{i\left( \tfrac{m_{j}}{n_{1}\ldots n_{j}}\right) 2n_{1}\ldots n_{j-1}}+\cdots ):r\geq 0\};
\end{array}%
\end{equation*}%
\begin{equation*}
\begin{array}{c}
\Sigma_{4}=\{(-ri,a_{\beta _{1}}r^{\tfrac{m_{1}}{n_{1}}}e^{i\left(\tfrac{m_{1}}{n_{1}}\right)\left(\tfrac{4n_{1}...n_{g-1}+3}{2}\right) \pi}+\,a_{\beta _{2}}r^{\tfrac{m_{2}}{n_{1}n_{2}}}e^{i\left( \tfrac{m_{2}}{n_{1}n_{2}}\right) \left(\tfrac{4n_{1}...n_{g-1}+3}{2}\right) \pi }+\cdots \text{ \ \ \ \ \ \ \ \ \ }\\
+\,a_{\beta _{j}}r^{\tfrac{m_{j}}{n_{1}\ldots n_{j}}}e^{i\left( \tfrac{m_{j}}{n_{1}\ldots n_{j}}\right)\left( \tfrac{4n_{1}...n_{j-1}+3}{2}\right) \pi }+\cdots):r\geq 0\}. \ \ \ \ \ \ \ \ \ \ \ \ \ \ \ \ \
\end{array}%
\end{equation*}%
Let
\begin{equation*}
\begin{array}{c}
\Gamma_{k}\left( r\right) =(re^{i\sigma _{k}\left( r\right) },b_{\beta_{1}}r^{\tfrac{l_{1}}{q_{1}}}e^{i\left( \tfrac{l_{1}}{q_{1}}\right) \sigma _{k}\left( r\right)}+\,b_{\beta _{2}}r^{\tfrac{l_{2}}{q_{1}q_{2}}}e^{i\left( \tfrac{l_{2}}{q_{1}q_{2}}\right) \sigma_{k}\left( r\right) }
+\cdots \text{ \ \ \ \ \ \ \ \ \ \ \ \ \ \ \ \ \ \ \ \ \ \ \ \ \ \ \ }\\
+\,b_{\beta _{j}}r^{\tfrac{l_{j}}{q_{1}\ldots q_{j}}}e^{i\left( \tfrac{l_{j}}{q_{1}\ldots q_{j}}\right) \sigma _{k}\left( r\right) }+\cdots )\in F\left( \Sigma_{k}\right) ;\text{ }k=1,2,3,4.
\end{array}%
\end{equation*}%

By contradiction, let us suppose that there exists a bi-$\alpha $-H\"{o}lder homeomorphism
$F\colon\left( C,0\right) \rightarrow (\tilde{C},0),$ where $\alpha_{0}<\alpha <1.$

In this way, for each $n$, we define $r_n>0$ satisfying

\begin{equation*}
r_{k,n}=\|F^{-1}(\Gamma_k(r_n^{\frac{1}{\alpha}}))\|, \mbox{ for } k=1,2 \mbox{ and } 3.
\end{equation*}

It comes from Proposition \ref{1} that%

\begin{equation*}
\left\Vert \Gamma_{1}\left( r_{1,n}\right) -\Gamma_{3}\left( r_{3,n}\right)
\right\Vert \lesssim (r_n)^{\dfrac{l_{j}.n_{1}.\ldots n_{j}+m_{j}.q_{1}.\ldots.q_{j}}{2.q_{1}.\ldots.q_{j}.n_{1}.\ldots n_{j}}\alpha^2}
\end{equation*}%

Moreover, we know that either%
$$|\gamma_{1}\left( r\right) -\gamma_{2}\left( r\right) |\leq |\gamma_{1}\left( r\right) -\gamma_{3}\left( r\right) |$$
or
$$|\gamma_{1}\left( r\right) -\gamma_{4}\left( r\right) |\leq |\gamma_{1}\left( r\right) -\gamma_{3}\left( r\right) |,$$
for all  $r>0$. Up to a subsequence, we may suppose that
\begin{equation*}
|\gamma_{1}\left( r_{1,n}\right) -\gamma_{2}\left( r_{3,n}\right) |\leq |\gamma_{1}\left( r_{1,n}\right) -\gamma_{3}\left( r_{3,n}\right) |, \forall n.
\end{equation*}
$\therefore$
\begin{equation*}
\left\Vert \Gamma_{1}\left( r_{1,n}\right) -\Gamma_{2}\left( r_{3,n}\right)
\right\Vert \leq \left\Vert \Gamma_{1}\left( r_{1,n}\right) -\Gamma_{3}\left( r_{3,n}\right)
\right\Vert ,\forall n.
\end{equation*}%

Let us denote $\delta _{j}(r_{k,n}\mathbf{)}=F^{-1}\left( \Gamma _{j}\left( r_{k,n}\right)
\right) ;j,k=1,2,3$. Thus,
$$
\delta _{1}(r_{1,n}\mathbf{)=(}h\left( r_{1,n}\right) ,a_{\beta _{1}}h\left( r_{1,n}\right)^{m_{1}/n_{1}}+\ldots)
$$ and
$$\delta _{2}(r_{2,n}\mathbf{)=(}%
ig\left( r_{2,n}\right) ,a_{\beta_{1}}g\left( r_{2,n}\right) ^{m_{1}/n_{1}}e^{i\left( m_{1}/n_{1}\right) \pi
/2}+\ldots)
$$
where $(r_n)^{\tfrac{1}{\alpha }}\lesssim \left\vert h\left( r_{1,n}\right) \right\vert
\approx \left\vert g\left( r_{2,n}\right) \right\vert \lesssim (r_n)^{\alpha }.$
Therefore,%
\begin{equation*}
\left\vert \delta _{1}\left( r_{1,n}\right) -\delta _{2}\left( r_{2,n}\right)
\right\vert \gtrsim \left\vert h\left( r_{1,n}\right) -ig\left( r_{2,n}\right)
\right\vert \gtrsim (r_n)^{\tfrac{1}{\alpha }}\ .
\end{equation*}%
Hence,
\begin{eqnarray*}
(r_n)^{\tfrac{1}{\alpha^2 }} &\lesssim& \left\Vert \delta _{1}\left( r_{1,n}\right)
-\delta _{2}\left( r_{2,n}\right) \right\Vert ^{\tfrac{1}{\alpha }} = (f_{\delta_1,\delta_2}(r))^{\tfrac{1}{\alpha}} \leq \left\Vert
\delta _{1}\left( r_{1,n}\right) -\delta _{2}\left( r_{3,n}\right) \right\Vert ^{\tfrac{1}{\alpha }} \\ &=& \left\Vert
F^{-1}\left( \Gamma _{1}\left( r_{1,n}\right)\right)) - F^{-1}\left(\Gamma _{2}\left( r_{3,n}\right)\right) \right\Vert ^{\tfrac{1}{\alpha }} \lesssim \left\Vert
\Gamma_{1}\left( r_{1,n}\right) -\Gamma_{2}\left( r_{3,n}\right) \right\Vert \\ &\leq &  \left\Vert
\Gamma _{1}\left( r_{1,n}\right) -\Gamma _{3}\left( r_{3,n}\right) \right\Vert  \lesssim
(r_n)^{\dfrac{l_{j}.n_{1}.\ldots n_{j}+m_{j}.q_{1}.\ldots.q_{j}}{2.q_{1}.\ldots.q_{j}.n_{1}.\ldots n_{j}}\alpha^2 }
\end{eqnarray*}%
and, therefore,
$$
\dfrac{l_{j}.n_{1}.\ldots n_{j}+m_{j}.q_{1}.\ldots.q_{j}}{2.q_{1}.\ldots.q_{j}.n_{1}.\ldots n_{j}}\alpha^2  \leq  \frac{1}{\alpha^2},$$ that is
$${\alpha^4}\leq \frac{2.q_{1}.\ldots.q_{j}.n_{1}.\ldots n_{j}}{l_{j}.n_{1}.\ldots n_{j}+m_{j}.q_{1}.\ldots.q_{j}}< \frac{l_{j}.n_{1}.\ldots n_{j}+m_{j}.q_{1}.\ldots.q_{j}}{2.m_{j}.q_{1}.\ldots q_{j}},
$$
what is a contradiction.

The other cases are similar.
\end{proof}

\begin{proof}[Proof of Theorem \ref{main-theorem}]

Let us suppose by contradiction that $C$ and $\tilde{C}$ are $\alpha$-H\"older homeomorphic for all $\alpha\in (0,1)$. From Lemma \ref{lemma3.2}, we get $g=\tilde{g}$, and by Lemma \ref{lemma3.0}, we know that  $$\dfrac{l_{i}}{q_{1}\ldots q_{i}}=\dfrac{m_{i}}{n_{1}\ldots n_{i}},\forall i.$$

By taking $i=1$,  in the previous equation, we get
$\dfrac{m_{1}}{n_{1}}=\dfrac{l_{1}}{q_{1}}$, that is, $n_{1}=q_{1}$  and $m_{1}=l_{1}.$ By taking $i=2$, in the previous equation, we get $$\dfrac{m_{2}}{n_{1}n_{2}}=\dfrac{l_{2}}{q_{1}q_{2}}.$$ Since $n_{1}=q_{1},$ it follows that $n_{2}=q_{2}$ and $m_{2}=l_{2}.$

Following in that way, for $i=g,$ we  get  $$\dfrac{m_{g}}{n_{1}\ldots
n_{g}}=\dfrac{l_{g}}{q_{1}\ldots q_{g}}.$$
Since we have proved that $n_{1}=q_{1},n_{2}=q_{2},
\cdots ,n_{g-1}=q_{g-1},$ we  have $n_{g}=q_{g}$ and $m_{g}=l_{g}.$ Then $(m_{1},n_{1})=(l_{1},q_{1}),$ $%
(m_{2},n_{2})=(l_{2},q_{2}),...,(m_{g},n_{g})=(l_{g},q_{g}),$ hence $(C,0) $ and $(\tilde{C},0)$  have the same characteristic exponents.
\end{proof}


In the next, we are dealing with germs of complex analytic plane curves having more than one branch at $0\in\C^2$ and we are going to arrive in a result like Theorem \ref{main-theorem}. Let us start pointing out the following version of Proposition \ref{1} for  germs of complex analytic plane curves with several branches.

\begin{proposition}\label{contact2}
Let $C$ and $\tilde{C}$ be germs of complex analytic plane curves at $0\in\C^2$. Let $h:(C,0)\to (\tilde{C},0)$ be a bi-$\alpha$-H\"older homeomorphism. If $C_1,...,C_r$ are the irreducible components of $C$, then $h(C_1),...,h(C_r)$ are the irreducible components of $\tilde{C}$ and
$$\alpha^2\leq \frac{Cont(C_i,C_j)}{Cont(h(C_i),h(C_j))}\leq \frac{1}{\alpha^2} .$$
\end{proposition}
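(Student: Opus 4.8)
The plan is to prove the statement in two stages: first I would identify $h(C_1),\dots,h(C_r)$ as the irreducible components of $\tilde C$ by a purely topological argument, and then I would obtain the contact inequality as a direct consequence of Proposition \ref{1}.

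For the first stage, I would use the standard fact that, for a sufficiently small representative, any two distinct branches of a reduced plane curve germ meet only at the origin, i.e. $C_i\cap C_j=\{0\}$ for $i\neq j$. Each branch $C_i$ is the image of a Puiseux parametrization, so $C_i\setminus\{0\}$ is connected (homeomorphic to a punctured disk); consequently the connected components of $C\setminus\{0\}$ are exactly the sets $C_i\setminus\{0\}$, and likewise the connected components of $\tilde C\setminus\{0\}$ are the punctured branches of $\tilde C$. Since $h$ is a homeomorphism with $h(0)=0$, it restricts to a homeomorphism $C\setminus\{0\}\to\tilde C\setminus\{0\}$ and therefore carries connected components bijectively onto connected components. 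Thus each $h(C_i\setminus\{0\})$ is a punctured branch of $\tilde C$; taking closures shows that $h(C_i)$ is a branch of $\tilde C$ and that $C$ and $\tilde C$ have the same number of branches, so $h(C_1),\dots,h(C_r)$ are precisely the irreducible components of $\tilde C$.

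For the second stage, I would fix $i\neq j$ and apply Proposition \ref{1} with $X=C$, $Y=\tilde C$, $\Gamma_1=C_i$ and $\Gamma_2=C_j$. The hypotheses hold, since $C_i$ and $C_j$ are closed and $C_i\cap C_j=\{0\}$. Proposition \ref{1} then gives $\alpha^2\, Cont(h(C_i),h(C_j))\leq Cont(C_i,C_j)\leq \frac{1}{\alpha^2}\,Cont(h(C_i),h(C_j))$. By the first stage, $h(C_i)$ and $h(C_j)$ are distinct branches of $\tilde C$, so $h(C_i)\cap h(C_j)=\{0\}$ and their contact is a finite number at least $1$; dividing the chain of inequalities by $Cont(h(C_i),h(C_j))$ yields the asserted bound.

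The main obstacle is the first stage: one must be sure that distinct branches meet only at the origin in a sufficiently small representative and that each punctured branch is connected, so that the irreducible components are detected purely topologically as the connected components of $C\setminus\{0\}$. Once this is in place the contact estimate is immediate from Proposition \ref{1}, the only remaining point being that the denominator $Cont(h(C_i),h(C_j))$ is finite, which holds because two distinct analytic branches separate at a finite (polynomial) order.
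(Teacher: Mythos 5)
Your proof is correct, and the second stage (applying Proposition \ref{1} to $\Gamma_1=C_i$, $\Gamma_2=C_j$ and dividing, after noting that $1\leq Cont(h(C_i),h(C_j))<\infty$ for distinct analytic branches) is exactly what the paper does. Where you diverge is in the first stage: the paper disposes of the claim that $h(C_1),\dots,h(C_r)$ are the irreducible components of $\tilde C$ by citing Lemma A.8 of Gau--Lipman \cite{Gau-Lipman:1983}, a general statement that homeomorphisms of analytic germs preserve irreducible components, whereas you reprove the special case needed here by hand: for a reduced plane curve germ the punctured branches $C_i\setminus\{0\}$ are pairwise disjoint, connected, and both open and closed in $C\setminus\{0\}$, hence are precisely its connected components, and a homeomorphism fixing $0$ permutes connected components of the punctured germs. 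Your route is more elementary and self-contained, at the cost of being specific to curves (it exploits that distinct branches meet only at the origin and that a Puiseux parametrization gives a punctured disk); the citation buys generality and brevity. One small point to tighten if you write this up: the argument should be phrased at the level of germs or of suitably shrunk representatives, since the image $h(C\cap U)$ need not be of the form $\tilde C\cap B_\epsilon$, so "connected components of $\tilde C\setminus\{0\}$" should be read as connected components of a small enough punctured representative; this is routine but worth a sentence.
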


\begin{proof}
By  Lemma A.8 in \cite{Gau-Lipman:1983}, it follows that  $h(C_1),...,h(C_r)$ are the irreducible components of $\tilde{C}$ and, by Proposition \ref{1}, $$\alpha^2\leq \frac{Cont(C_i,C_j)}{Cont(h(C_i),h(C_j))}\leq \frac{1}{\alpha^2} .$$
\end{proof}

\begin{theorem}\label{main-result}
Let $C_1,C_2,\tilde{C}_1$ and $\tilde{C}_2$ be complex analytic plane branches. If $Cont(C_1,C_2)\not=Cont(\tilde{C}_1,\tilde{C}_2)$, then there exists $0<\alpha<1$ such that $C=C_1\cup C_2$ is not $\alpha$-H\"older homeomorphic to $\tilde{C}=\tilde{C}_1\cup\tilde{C}_2$.
\end{theorem}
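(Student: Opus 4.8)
The plan is to deduce the statement directly from Proposition \ref{contact2}, which simultaneously encodes that a bi-$\alpha$-H\"older homeomorphism respects the decomposition into branches and that it distorts contact by a factor at most $1/\alpha^2$ on each side. First I would argue by contradiction: I would fix an exponent $\alpha\in(0,1)$ (to be pinned down at the end) and assume there is a bi-$\alpha$-H\"older homeomorphism $h\colon(C,0)\to(\tilde{C},0)$, where $C=C_1\cup C_2$ and $\tilde{C}=\tilde{C}_1\cup\tilde{C}_2$. By Proposition \ref{contact2}, $h$ sends the two irreducible components $C_1,C_2$ of $C$ onto the two irreducible components of $\tilde{C}$, so that $\{h(C_1),h(C_2)\}=\{\tilde{C}_1,\tilde{C}_2\}$. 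Because $Cont$ is symmetric in its two arguments, this gives $Cont(h(C_1),h(C_2))=Cont(\tilde{C}_1,\tilde{C}_2)$ regardless of which branch is sent to which.

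Next I would feed $i=1$, $j=2$ into the inequality of Proposition \ref{contact2} to obtain
$$\alpha^2\le\frac{Cont(C_1,C_2)}{Cont(\tilde{C}_1,\tilde{C}_2)}\le\frac{1}{\alpha^2}.$$
Writing $a=Cont(C_1,C_2)$ and $b=Cont(\tilde{C}_1,\tilde{C}_2)$, I note that these are finite real numbers, since two distinct plane branches have distinct Puiseux expansions and therefore separate at a finite order; the hypothesis $a\neq b$ then says that $\rho:=a/b$ is a positive real number with $\rho\neq 1$. If one wished to allow infinite contact the argument only gets easier: if exactly one of $a,b$ is infinite the displayed chain fails for every $\alpha$, and both infinite would contradict $a\neq b$.

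Finally I would choose $\alpha$ so as to break the displayed chain of inequalities. Since $\rho\neq 1$, the number $\min\{\rho,1/\rho\}$ is strictly smaller than $1$, so every $\alpha$ with $\sqrt{\min\{\rho,1/\rho\}}<\alpha<1$ satisfies $\alpha^2>\min\{\rho,1/\rho\}$, which contradicts at least one of the bounds $\alpha^2\le\rho$ and $\rho\le 1/\alpha^2$. For any such $\alpha$ no bi-$\alpha$-H\"older homeomorphism between $C$ and $\tilde{C}$ can exist, which is exactly the conclusion. The whole argument is short because the substantive work is hidden inside Proposition \ref{contact2}; the only points I would be careful about are the branch-preservation claim (which rests on Lemma A.8 of \cite{Gau-Lipman:1983}, used to establish that proposition) and the implicit finiteness of the two contact numbers, rather than any fresh estimate.
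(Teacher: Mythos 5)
Your proposal is correct and follows essentially the same route as the paper: the authors also set $\alpha_0^2=\min\left\{\frac{Cont(\tilde{C}_1,\tilde{C}_2)}{Cont(C_1,C_2)},\frac{Cont(C_1,C_2)}{Cont(\tilde{C}_1,\tilde{C}_2)}\right\}<1$ and invoke Proposition \ref{contact2} to rule out bi-$\alpha$-H\"older homeomorphisms for $\alpha_0<\alpha<1$. Your added remarks on the finiteness of the contact numbers and on the symmetry of $Cont$ are sensible points of care that the paper leaves implicit, but they do not change the argument.
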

\begin{proof}
Let us take $$\alpha_0^2=\min\left\{\frac{Cont(\tilde{C}_1,\tilde{C}_2)}{Cont(C_1,C_2)},\frac{Cont(C_1,C_2)}{Cont(\tilde{C}_1,\tilde{C}_2)}\right\}<1.$$
So, it comes from Proposition \ref{contact2} that $C$ is not $\alpha$-H\"older homeomorphic to $\tilde{C}$ with $\alpha_0<\alpha<1$.
\end{proof}

As a consequence of Theorems \ref{main-theorem} and \ref{main-result}, we get the following
\begin{theorem}\label{first-cor}
Let $C$ and $\tilde{C}$ be germs of complex analytic plane curves at $0\in\C^2$. Let $C_1,...,C_r$ and $\tilde{C}_1,...,\tilde{C}_s$ be the branches of $C$ and $\tilde{C}$, respectively. If, for each $\alpha\in (0,1)$, there exists a bi-$\alpha$-H\"older homeomorphism between $C$ and $\tilde{C}$, then there is a bijection $\sigma:\{1,...,r\}\to \{1,...,s\}$ such that
\begin{itemize}
\item [i)] the branches $C_i$ and $\tilde{C}_{\sigma(i)}$ have the sames characteristic exponents, for $i=1,...,r$;
\item [ii)] the pair of branches $(C_i,C_j)$ and $(\tilde{C}_{\sigma(i)},\tilde{C}_{\sigma(j)})$ have the same intersection multiplicity at $0$, for $ i\neq j\in \{1,\dots,r\}$.
\end{itemize}
\end{theorem}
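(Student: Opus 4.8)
The plan is to extract from the one-parameter family of homeomorphisms a single combinatorial bijection and then feed it into the two main theorems. First I would fix, for each $\alpha\in(0,1)$, a bi-$\alpha$-H\"older homeomorphism $h_\alpha\colon C\to\tilde C$. By Proposition \ref{contact2} each $h_\alpha$ carries the irreducible components of $C$ onto those of $\tilde C$; in particular $r=s$ and $h_\alpha$ induces a bijection $\sigma_\alpha\colon\{1,\dots,r\}\to\{1,\dots,s\}$ with $h_\alpha(C_i)=\tilde C_{\sigma_\alpha(i)}$. Choosing $\alpha_n=1-\tfrac1n\to 1$ and using that there are only finitely many bijections of an $r$-element set, some fixed $\sigma$ satisfies $\sigma_{\alpha_n}=\sigma$ along a subsequence; after passing to it I may assume $h_{\alpha_n}(C_i)=\tilde C_{\sigma(i)}$ for every $i$ and every $n$, with $\alpha_n\to 1$. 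This $\sigma$ will be the bijection claimed in the statement, and both items will be verified along this one subsequence.

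For item (i), I would fix $i$ and restrict $h_{\alpha_n}$ to $C_i$. Since $h_{\alpha_n}$ maps $C_i$ homeomorphically onto $\tilde C_{\sigma(i)}$ and the H\"older inequalities for $h_{\alpha_n}$ and $h_{\alpha_n}^{-1}$ pass to subsets, the restriction is a bi-$\alpha_n$-H\"older homeomorphism $C_i\to\tilde C_{\sigma(i)}$. If $C_i$ and $\tilde C_{\sigma(i)}$ had different characteristic exponents, Theorem \ref{main-theorem} would produce some $\alpha^*\in(0,1)$ for which they are not $\alpha^*$-H\"older homeomorphic; but $\alpha_n\to 1$ gives an $n$ with $\alpha_n>\alpha^*$, and by the monotonicity Remark a bi-$\alpha_n$-H\"older homeomorphism is in particular bi-$\alpha^*$-H\"older, a contradiction. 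Hence $C_i$ and $\tilde C_{\sigma(i)}$ share the same characteristic exponents.

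For item (ii), I would fix $i\ne j$ and again restrict, obtaining bi-$\alpha_n$-H\"older homeomorphisms $C_i\cup C_j\to\tilde C_{\sigma(i)}\cup\tilde C_{\sigma(j)}$ for all $n$. Proposition \ref{contact2} then yields $\alpha_n^2\le Cont(C_i,C_j)/Cont(\tilde C_{\sigma(i)},\tilde C_{\sigma(j)})\le \alpha_n^{-2}$, and letting $n\to\infty$ forces $Cont(C_i,C_j)=Cont(\tilde C_{\sigma(i)},\tilde C_{\sigma(j)})$; equivalently, unequal contacts are ruled out by Theorem \ref{main-result} together with the Remark. The final step, and the one I expect to be the crux, is to convert this equality of metric contacts into equality of intersection multiplicities: the intersection multiplicity of two plane branches is a classical function of the characteristic exponents of each branch together with their coincidence (contact) order, and this function is increasing in the contact once the two sets of characteristic exponents are fixed. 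Since item (i) has already matched the characteristic exponents of $C_i$ with those of $\tilde C_{\sigma(i)}$ and of $C_j$ with those of $\tilde C_{\sigma(j)}$, and the contacts now agree, the intersection multiplicities $(C_i\cdot C_j)_0$ and $(\tilde C_{\sigma(i)}\cdot\tilde C_{\sigma(j)})_0$ must coincide. Making this last dictionary precise---verifying that the metric contact defined here really equals the Puiseux coincidence exponent and invoking the classical intersection formula---is the main technical obstacle; everything else is a soft combination of restriction to subcurves, pigeonhole, and passage to the limit $\alpha_n\to 1$.
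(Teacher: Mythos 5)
Your proposal is correct and follows the same skeleton as the paper's proof: a branch-to-branch correspondence (via Proposition \ref{contact2}, i.e.\ Gau--Lipman), Theorem \ref{main-theorem} to match characteristic exponents, Theorem \ref{main-result} (equivalently Proposition \ref{contact2}) to match contacts, and finally the classical dictionary converting equal contact into equal intersection multiplicity. The one genuine difference is how the single bijection $\sigma$ is extracted. The paper fixes \emph{one} $\alpha$ by taking the maximum $k_0<1$ of the finite set of thresholds $k_{ij}(C_u,\tilde C_v)$ and contact ratios, choosing $\alpha^4>k_0$, and using a single homeomorphism $h$; this implicitly leans on the quantitative content of Lemmas \ref{lemma3.2} and \ref{lemma3.0} rather than on the qualitative statement of Theorem \ref{main-theorem}. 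You instead take $\alpha_n\to 1$ and pigeonhole over the finitely many bijections to freeze $\sigma$ along a subsequence, after which only the qualitative statements of Theorems \ref{main-theorem} and \ref{main-result} together with the monotonicity remark are needed; this is a slightly softer and arguably cleaner argument, at the cost of a subsequence extraction. For the final step you correctly identify exactly what is missing: the paper closes the gap by citing Lemma 3.1 of \cite{F} (the metric contact equals the Puiseux coincidence exponent) and Proposition 2.4 of \cite{M} (coincidence plus the characteristic exponents determine the intersection multiplicity), which is precisely the dictionary you describe, so no real gap remains.
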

\begin{proof}
Let us remark that, if $h:C\to\tilde{C}$ is a homeomorphism, then by Lemma A.8 in \cite{Gau-Lipman:1983}, as already used in the proof of the Theorem \ref{main-theorem}, for each $u\in \{1,...,r\}$ there is exactly one $j\in \{1,...,s\}$ such that $h(C_u)= \tilde{C}_v$ and, in particular, $r=s$.
Let $E=\{\frac{1}{2}\}\cup\{k=k_{ij}(C_u,\tilde{C_v}); u,v\in\{1,...,r\}\mbox{ and } k<1\}\cup\{k=\min\left\{\frac{Cont(\tilde{C}_u,\tilde{C}_v)}{Cont(C_i,C_j)},\frac{Cont(C_i,C_j)}{Cont(\tilde{C}_u,\tilde{C}_v)}\right\}; i,j,u,v\in\{1,...,r\}, i\not=j, u\not=v \mbox{ and } k<1\}$. We have that $E$ is a finite and non-empty set with $k_0=\max E<1$. Thus, let $\alpha\in (0,1)$ such that $\alpha^4>k_0$ and let $h:C\to \tilde{C}$ be a bi-$\alpha$-H\"older homeomorphism. By Theorem \ref{main-theorem}, for each $i\in \{1,...,r\}$, $C_i$ and $\tilde{C}_{\sigma(i)}=h(C_i)$ have the same characteristic exponents. Moreover, for each $i,j\in \{1,...,r\}$, by Theorem \ref{main-result}, $Cont(C_i,C_j)=Cont(\tilde{C}_{\sigma(i)},\tilde{C}_{\sigma(j)})$. Since $Cont(C_i,C_j)=Cont(\tilde{C}_{\sigma(i)},\tilde{C}_{\sigma(j)})$, it comes from Lemma 3.1 in \cite{F} that the pairs $(C_i,C_j)$ and $(\tilde{C}_{\sigma(i)},\tilde{C}_{\sigma(j)})$ have the same coincidence at $0$ and, therefore, by Proposition 2.4 in \cite{M}, we get that the pairs $(C_i,C_j)$ and $(\tilde{C}_{\sigma(i)},\tilde{C}_{\sigma(j)})$ have the same intersection multiplicity at $0$.
\end{proof}

We are going to show that Theorem \ref{first-cor} generalizes some known results which we list below. For instance, since Lipschitz maps are $\alpha$-H\"older for all $0<\alpha\leq 1$, we obtain, as a first application of Theorem \ref{first-cor}, the main result in \cite{F}.
\begin{corollary}\label{cor-2}
Let $X$ and $Y$ be germs of complex analytic plane curves at $0\in\C^2$. If there exists a bi-Lipschitz subanalytic map between $X$ and $Y$, then $X$ and $Y$ are topologically equivalent.
\end{corollary}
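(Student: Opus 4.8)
The plan is to deduce this statement directly from Theorem \ref{first-cor} together with the embedded topological classification of plane curve singularities recalled in the introduction. The only point to verify on the hypothesis side is that a single bi-Lipschitz homeomorphism supplies, simultaneously, a bi-$\alpha$-H\"older homeomorphism for \emph{every} $\alpha\in(0,1)$, which is exactly what Theorem \ref{first-cor} requires.

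First I would observe that a bi-Lipschitz subanalytic map $h\colon X\to Y$ is in particular a bi-Lipschitz homeomorphism, that is, a bi-$1$-H\"older homeomorphism in the terminology of the Remark following the Definition of $\alpha$-H\"older homeomorphism; subanalyticity plays no role in what follows, so the conclusion in fact holds for arbitrary bi-Lipschitz homeomorphisms. By that same Remark, if $(X,0)$ is $\alpha_0$-H\"older homeomorphic to $(Y,0)$ for some $0<\alpha_0\leq 1$, then it is $\alpha$-H\"older homeomorphic to $(Y,0)$ for every $0<\alpha\leq\alpha_0$. Taking $\alpha_0=1$, the single map $h$ witnesses that $X$ and $Y$ are bi-$\alpha$-H\"older homeomorphic for all $\alpha\in(0,1)$.

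Next, I would apply Theorem \ref{first-cor} to the pair $X$ and $Y$ with this family of bi-$\alpha$-H\"older homeomorphisms. The theorem yields a bijection $\sigma$ between the sets of branches of $X$ and $Y$ such that, for every index $i$, the branch $X_i$ and its image $Y_{\sigma(i)}$ have the same sequence of characteristic exponents, and, for every pair $i\neq j$, the pairs $(X_i,X_j)$ and $(Y_{\sigma(i)},Y_{\sigma(j)})$ have the same intersection multiplicity at $0$.

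Finally, I would invoke the embedded topological classification of germs of complex analytic plane curves due to Brauner, Burau, K\"ahler and Zariski (see \cite{B-K}), recalled at the beginning of the introduction: two such germs are topologically equivalent if and only if there is a correspondence between their branches preserving the sequence of characteristic exponents of branches and the intersection multiplicity of pairs of branches. The bijection $\sigma$ produced above is precisely such a correspondence, whence $X$ and $Y$ are topologically equivalent. The only genuine content is the invariance statement packaged in Theorem \ref{first-cor}; once that is available, the passage to topological equivalence is immediate from the classical theorem, so I do not expect any real obstacle in this last step.
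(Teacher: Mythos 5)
Your proposal is correct and follows essentially the same route as the paper, which derives this corollary from Theorem \ref{first-cor} via the one-line observation that a bi-Lipschitz map is bi-$\alpha$-H\"older for every $0<\alpha\leq 1$, and then appeals to the classical Brauner--Burau--K\"ahler--Zariski classification; your remark that subanalyticity is not needed also matches the paper's own comment preceding its next corollary.
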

Actually, we do not use the subanalytic hypotheses in Theorem \ref{first-cor}, hence we obtain the following result proved in \cite{N-P}.
\begin{corollary}\label{cor-3}
Let $X$ and $Y$ be germs of complex analytic plane curves at $0\in\C^2$. If there exists a bi-Lipschitz homeomorphism between $X$ and $Y$, then $X$ and $Y$ are topologically equivalent.
\end{corollary}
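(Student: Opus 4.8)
The plan is to reduce this statement directly to Theorem \ref{first-cor}, together with the classical topological classification of plane curve singularities recalled in the Introduction. First I would observe that a bi-Lipschitz homeomorphism is precisely a bi-$1$-H\"older homeomorphism, as noted in the Remark following the definition of $\alpha$-H\"older equivalence. Hence the hypothesis supplies a single germ of homeomorphism $h\colon X\to Y$ which is bi-$1$-H\"older.

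Next I would promote this single map to a family of bi-$\alpha$-H\"older homeomorphisms indexed by every $\alpha\in(0,1)$. By the same Remark, if $X$ and $Y$ are $\alpha_0$-H\"older homeomorphic for some $0<\alpha_0\leq 1$, then they are $\alpha$-H\"older homeomorphic for all $0<\alpha\leq\alpha_0$; taking $\alpha_0=1$ yields a bi-$\alpha$-H\"older homeomorphism for every $\alpha\in(0,1]$, and in particular for every $\alpha\in(0,1)$. Concretely, since we work with germs we may choose bounded representatives, on which $\|p-q\|^{\alpha}\lesssim\|p-q\|^{\alpha_0}$ whenever $\alpha\leq\alpha_0$, so that $h$ and $h^{-1}$ remain $\alpha$-H\"older with a suitably adjusted constant. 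In this way the hypothesis of Theorem \ref{first-cor} is met, the \emph{same} map $h$ serving for every exponent $\alpha$.

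Applying Theorem \ref{first-cor}, I then obtain a bijection $\sigma$ between the branches of $X$ and those of $Y$ such that corresponding branches $C_i$ and $\tilde{C}_{\sigma(i)}$ share the same sequence of characteristic exponents, and such that corresponding pairs of branches have the same intersection multiplicity at $0$. By the theorem of Brauner, Burau, K\"ahler and Zariski recalled in the Introduction (see \cite{B-K}), this is exactly the combinatorial data that characterizes the embedded topological type of a germ of plane curve: two such germs are topologically equivalent if and only if there is a correspondence between their branches preserving both characteristic exponents and pairwise intersection multiplicities. Hence $X$ and $Y$ are topologically equivalent, as claimed.

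This proof carries essentially no obstruction of its own: all the analytic substance is concentrated in Proposition \ref{1} and in Theorem \ref{first-cor}, while the passage from numerical invariants to topological equivalence is the classical recognition theorem. The only point requiring a modicum of care is the promotion step, where one must verify that lowering the H\"older exponent preserves the bi-H\"older property; this is precisely where the germ (boundedness) hypothesis enters, and it is entirely routine.
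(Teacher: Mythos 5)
Your argument is correct and is exactly the route the paper takes: the paper derives this corollary from Theorem \ref{first-cor} by noting that a bi-Lipschitz map is bi-$\alpha$-H\"older for every $\alpha\in(0,1]$, and then invokes the classical Brauner--Burau--K\"ahler--Zariski classification to pass from the preserved invariants to topological equivalence. Your extra care in the ``promotion step'' (using bounded representatives to lower the exponent) is a correct filling-in of a detail the paper leaves implicit.
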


Since germs of complex analytic curves in $\C^n$ (spatial curves) are bi-Lipschitz homeomorphic to their generic projections, we also get, as a immediate consequence of Theorem \ref{first-cor} the following.
\begin{corollary}\label{cor-3}
Let $X$ and $Y$ be germs of complex analytic curves in $\C^n$ and $\C^m$ respectively. If there exists a bi-$\alpha$-H\"older homeomorphism between $X$ and $Y$, for all $0<\alpha\leq 1$, then $X$ and $Y$ are bi-Lipschitz homeomorphic.
\end{corollary}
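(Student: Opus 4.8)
The plan is to reduce the spatial curves $X\subset\C^n$ and $Y\subset\C^m$ to plane curves by generic projection and then feed them into Theorem \ref{first-cor} together with the Pham--Teissier theorem. First I would fix generic linear projections $\pi_X\colon\C^n\to\C^2$ and $\pi_Y\colon\C^m\to\C^2$ so that $X'=\pi_X(X)$ and $Y'=\pi_Y(Y)$ are germs of complex analytic plane curves at $0\in\C^2$ and so that the restrictions $\pi_X|_X\colon X\to X'$ and $\pi_Y|_Y\colon Y\to Y'$ are bi-Lipschitz homeomorphisms. This is precisely the fact recalled immediately before the statement (spatial curves are bi-Lipschitz homeomorphic to their generic projections), so I may take it for granted.

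Next I would transport the bi-$\alpha$-H\"older equivalence down to the plane, for each fixed $\alpha\in(0,1)$. Let $h_\alpha\colon X\to Y$ be a bi-$\alpha$-H\"older homeomorphism furnished by the hypothesis. Since bi-Lipschitz maps are bi-$1$-H\"older, and since the composition of an $\alpha$-H\"older map with a Lipschitz map is again $\alpha$-H\"older (the Lipschitz factor contributes only a multiplicative constant and leaves the exponent unchanged), the composite
\[
H_\alpha=\pi_Y|_Y\circ h_\alpha\circ(\pi_X|_X)^{-1}\colon X'\to Y'
\]
is bi-$\alpha$-H\"older, its inverse being the analogous composite $\pi_X|_X\circ h_\alpha^{-1}\circ(\pi_Y|_Y)^{-1}$. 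Hence $X'$ and $Y'$ are bi-$\alpha$-H\"older homeomorphic for \emph{every} $\alpha\in(0,1)$.

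Now I would invoke the plane-curve machinery already established. By Theorem \ref{first-cor} there is a bijection between the branches of $X'$ and the branches of $Y'$ preserving the sequence of characteristic exponents of each branch and the intersection multiplicity of each pair of branches; equivalently, $X'$ and $Y'$ are topologically equivalent as germs embedded in $(\C^2,0)$. Applying the Pham--Teissier theorem (\cite{P-T}) in the direction from topological equivalence to bi-Lipschitz equivalence, I obtain a bi-Lipschitz homeomorphism $G\colon X'\to Y'$. Finally I would reassemble the maps: the composite
\[
(\pi_Y|_Y)^{-1}\circ G\circ\pi_X|_X\colon X\to Y
\]
is a composition of three bi-Lipschitz homeomorphisms and is therefore itself bi-Lipschitz, which is the desired conclusion.

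The only substantive point is the setup of the generic projections: they must be chosen so that $X$ and $Y$ are projected into a \emph{common} target $\C^2$ and so that $X'$ and $Y'$ are genuine plane curves to which both Theorem \ref{first-cor} and Pham--Teissier apply. Everything else is bookkeeping of H\"older exponents under composition, where the key (and elementary) observation used repeatedly is that bi-Lipschitz $=$ bi-$1$-H\"older, so pre- and post-composing with bi-Lipschitz maps neutralizes the outer factors and preserves the exponent $\alpha$.
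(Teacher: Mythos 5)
Your proposal is correct and follows essentially the same route as the paper: generic projection to plane curves, transfer of the bi-$\alpha$-H\"older equivalences through the bi-Lipschitz projections, Theorem \ref{first-cor} to get the branch correspondence preserving characteristic exponents and intersection multiplicities, and Pham--Teissier to produce a bi-Lipschitz homeomorphism, which is then pulled back to the spatial curves. Your write-up is in fact slightly more careful than the paper's about why pre- and post-composition with bi-Lipschitz maps preserves the H\"older exponent.
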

\begin{proof} Let $\tilde{X}\subset\C^2$ and $\tilde{Y}\subset\C^2$ be generic projections of $X$ and $Y$ respectively. Since $X$ and $\tilde{X}$ (respectively $Y$ and $\tilde{Y}$) are bi-Lipschitz homeomorphic, it follows that $\tilde{X}$ and $\tilde{Y}$ are $\alpha$-H\"older homeomorphic for all $\alpha\in (0,1)$. By Theorem \ref{first-cor}, there exist a bijection between the branches of $\tilde{X}$ and $\tilde{Y}$ that preserves characteristic exponents of branches and, also, preserves intersection multiplicity of pairs of branches. Hence, using Pham-Teissier Theorem (quoted in the introduction), $\tilde{X}$ and $\tilde{Y}$ come bi-Lipschitz homeomorphic. It finishes the proof.
\end{proof}

We also obtain, in the case of complex analytic plane curves, a generalization of the main result in \cite{BirbrairFLS:2016} and the Theorem 4.2 in \cite{Sampaio:2016}.
\begin{corollary}\label{application2}
Let $X\in \C^n$ be a germ of complex analytic curve at the origin. Suppose that, for each $\alpha\in (0,1)$, there is a bi-$\alpha$-H\"older homeomorphism $h:(X,0)\to (\C,0)$. Then, $(X,0)$ is smooth.
\end{corollary}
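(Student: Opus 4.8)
The plan is to reduce the statement to the planar case, where Theorem \ref{first-cor} applies, and then to transport the conclusion back to $X$ by means of a generic projection.

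First I would pass to a generic projection $\tilde{X}\subset\C^2$ of $X$. As recalled just above, germs of complex analytic curves in $\C^n$ are bi-Lipschitz homeomorphic to their generic projections, so there is a bi-Lipschitz homeomorphism $\psi\colon(\tilde{X},0)\to(X,0)$; in particular $\psi$ is bi-$\beta$-H\"older for every $0<\beta\leq 1$. Since the composition of a bi-Lipschitz homeomorphism with a bi-$\alpha$-H\"older homeomorphism is again bi-$\alpha$-H\"older (a Lipschitz map is $1$-H\"older, and $\alpha$-H\"older composed with $1$-H\"older is $\alpha$-H\"older), composing the hypothesised homeomorphisms $h\colon(X,0)\to(\C,0)$ with $\psi$ yields, for each $\alpha\in(0,1)$, a bi-$\alpha$-H\"older homeomorphism $h\circ\psi\colon(\tilde{X},0)\to(\C,0)$. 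Note that $\C$ is itself a smooth plane curve, equal to its own generic projection, so both $\tilde{X}$ and $\C$ are germs of complex analytic plane curves at $0$.

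Next I would apply Theorem \ref{first-cor} to the two plane curves $\tilde{X}$ and $\C$. It provides a bijection between their branches preserving the sequence of characteristic exponents and the intersection multiplicities of pairs of branches. But $\C$ consists of a single branch whose sequence of characteristic exponents is $(1)$, that is, a smooth branch of multiplicity $1$. Hence $\tilde{X}$ must also consist of exactly one branch, with characteristic exponent sequence $(1)$; in other words, $\tilde{X}$ is a smooth plane curve of multiplicity $1$.

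Finally I would transfer smoothness back from $\tilde{X}$ to $X$. A generic projection preserves both the number of branches and the multiplicity of a germ of curve, and a germ of complex analytic curve of multiplicity $1$ is smooth. Since $\tilde{X}$ has a single branch of multiplicity $1$, the same holds for $X$, and therefore $(X,0)$ is smooth. The only delicate step is this last transfer: it rests on the standard facts that generic projection preserves multiplicity and that, for curve germs, multiplicity $1$ is equivalent to smoothness, which I would invoke rather than reprove; the rest of the argument is a direct application of Theorem \ref{first-cor} together with the composition property of H\"older maps.
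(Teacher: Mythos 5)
Your proof is correct. The paper actually states this corollary without giving any proof, so there is nothing to compare it against line by line; but your argument --- pass to a generic plane projection, note that composing with a bi-Lipschitz map preserves the bi-$\alpha$-H\"older property, apply Theorem \ref{first-cor} to $\tilde{X}$ and the smooth plane curve $\C$ to force a single branch with characteristic exponent sequence $(1)$, and transfer multiplicity $1$ back to $X$ --- is exactly the mechanism the authors use to prove the adjacent corollary on spatial curves, so it is essentially the intended route. The only remark worth adding is that one could alternatively finish by citing the preceding corollary (which already yields that $X$ is bi-Lipschitz homeomorphic to $\C$) together with the main result of \cite{BirbrairFLS:2016}; your version avoids that external input and instead reads smoothness off the characteristic exponents, which is more in the spirit of the paper's claim that this corollary \emph{generalizes} \cite{BirbrairFLS:2016} rather than follows from it.
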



We would like to finish this section by stressing the existence of germ of sets that are $\alpha$-H\"older homeomorphic, for all $0<\alpha<1$, but are not bi-Lipschitz homeomorphic.
\begin{definition}
{\rm We say that $h:C\subset \R^n\to \R^m$ is a}  log-Lipschitz map {\rm, if there exists $K>0$ such that} $\|h(x)-h(y)\|\leq K \|x-y\|\cdot|log\|x-y\||$, whenever $x,y\in C$ and $\|x-y\|<1$.
\end{definition}
\begin{remark}
{\rm If $h$ is a log-Lipschitz map, then $h$ is $\alpha$-H\"older, for all $\alpha\in (0,1)$.}
\end{remark}
\begin{definition}
{\rm  Let $(X,x_0)$ and $(Y,y_0)$ be germs of Euclidean subsets. We say that $(X,x_0)$ is \emph{bi-log-Lipschitz homeomorphic to} $(Y,y_0)$ if there exists a germ of homeomorphism $f\colon (X,x_0)\rightarrow (Y,y_0)$ such that $f$ and its inverse $f^{-1}$ are log-Lipschitz mappings. In this case, $f$ is called a \emph{bi-log-Lipschitz homeomorphism} from $(X,x_0)$ onto $(Y,y_0)$.}
\end{definition}
\begin{corollary}\label{application1}
Let $C$ and $\tilde{C}$ be germs of complex analytic plane curves at $0\in\C^2$. If $C$ and $\tilde{C}$ are bi-log-Lipschitz homeomorphic, then they are  bi-Lipschitz homeomorphic.
\end{corollary}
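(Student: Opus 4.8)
The plan is to deduce the statement directly from Theorem \ref{first-cor} together with the Pham--Teissier Theorem recalled in the introduction, so that essentially all the work is carried by results already in hand. First I would unwind the hypothesis. Let $f\colon(C,0)\to(\tilde{C},0)$ be a bi-log-Lipschitz homeomorphism, so that both $f$ and its inverse $f^{-1}$ are log-Lipschitz maps. By the Remark following the definition of log-Lipschitz maps, a log-Lipschitz map is $\alpha$-H\"older for every $\alpha\in(0,1)$; applying this to $f$ and to $f^{-1}$ shows that the single homeomorphism $f$ is a bi-$\alpha$-H\"older homeomorphism from $(C,0)$ onto $(\tilde{C},0)$ simultaneously for all $\alpha\in(0,1)$. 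In particular, for each fixed $\alpha\in(0,1)$ there exists a bi-$\alpha$-H\"older homeomorphism between $C$ and $\tilde{C}$, which is precisely the hypothesis required to invoke Theorem \ref{first-cor}.

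Second, I would apply Theorem \ref{first-cor}. This yields a bijection $\sigma$ between the branches of $C$ and those of $\tilde{C}$ such that corresponding branches $C_i$ and $\tilde{C}_{\sigma(i)}$ share the same sequence of characteristic exponents, and such that corresponding pairs $(C_i,C_j)$ and $(\tilde{C}_{\sigma(i)},\tilde{C}_{\sigma(j)})$ have the same intersection multiplicity at $0$. This is exactly the notion of embedded topological equivalence of $C$ and $\tilde{C}$ as germs in $(\C^2,0)$ discussed in the introduction.

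Finally, I would invoke the Pham--Teissier Theorem: since $C$ and $\tilde{C}$ are topologically equivalent as germs embedded in $(\C^2,0)$, there exists a germ of meromorphic bi-Lipschitz homeomorphism $\phi\colon(\C^2,0)\to(\C^2,0)$ with $\phi(C)=\tilde{C}$, whose restriction $\phi|_{C}\colon(C,0)\to(\tilde{C},0)$ is a bi-Lipschitz homeomorphism. Hence $C$ and $\tilde{C}$ are bi-Lipschitz homeomorphic, as claimed. There is no genuine obstacle in this argument; the only point deserving a word of care is that the log-Lipschitz hypothesis produces a \emph{single} homeomorphism that is bi-$\alpha$-H\"older for every $\alpha\in(0,1)$ at once, so that Theorem \ref{first-cor} applies verbatim, and that the implication of the Remark must be used for both $f$ and $f^{-1}$.
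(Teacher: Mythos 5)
Your proposal is correct and follows exactly the route the paper intends: the paper states this corollary without proof, but its placement immediately after the Remark that log-Lipschitz maps are $\alpha$-H\"older for all $\alpha\in(0,1)$, together with the identical pattern used in the proof of the preceding corollary on spatial curves (apply Theorem \ref{first-cor} and then the Pham--Teissier Theorem), makes clear that your argument is the intended one. Nothing to add.
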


According to the example below, one see that the last corollary is very dependent on the rigidity of analytic complex structure of the sets.

\begin{example}
Let $\tilde{C}=\{(x,y)\in \R^2; y=|xlog|x||\}\cup \{(0,0)\}$. The homeomorphism $h:(\R,0)\to (\tilde{C},0)$ given by $h(x)=(x,|xlog|x||)$ ($h(0)=(0,0)$) is a bi-log-Lipschitz homeomorphism. However, $(\tilde{C},0)$ is not bi-Lipschitz homeomorphic to $(\R,0)$.
\end{example}

\end{document}